  \newskip\prethm \prethm3.0pt plus1.3pt minus.4pt
  \newskip\posthm \posthm2.7pt plus1.4pt minus.3pt
  \newtheoremstyle{STATEMENT}%
       {\prethm}{\posthm}{\itshape}{\parindent}{\scshape}
       {.}{.6em plus.2em minus.1em}{}
  \newtheoremstyle{EXPLANATION}%
       {\prethm}{\posthm}{}{\parindent}{\scshape}
       {.}{.6em plus.2em minus.1em}{}
\theoremstyle{STATEMENT}
\newtheorem {theorem}{Theorem}
\newtheorem {lemma}{Lemma}
\newtheorem    {proposition} { Proposition}
\theoremstyle{EXPLANATION}
\newtheorem {definition} {Definition}
\newtheorem   {remark}{Remark}
\newtheorem    {example} {Example}
\begin{document}

\title{Liouville classification of integrable geodesic flows on a projective plane in a potential field}
\author{Antonov~E.\,I.\thanks{M. V. Lomonosov Moscow State University, Moscow, Russia.} \\ antonov.zhenya@hotmail.com \and Kozlov~I.\,K.\thanks{No Affiliation, Moscow, Russia.} \\  ikozlov90@gmail.com}
\date{\today}

\maketitle

\begin{abstract}  A Liouville classification of a natural Hamiltonian system on the projective plane with a rotation metric and a linear integral is obtained. All Fomenko--Zieschang invariants (i.e., labeled molecules) of the system are calculated. \end{abstract} 

\begin{center}
\textit{Key words: integrable Hamiltonian systems, geodesic flow, labeled molecule, Fomenko--Zieschang invariant.} 
\end{center}

\begin{flushleft}
{\bf  UDC 517.938.5}
\end{flushleft}

\section{Introduction}

\noindent
In this paper, we study topological properties of an integrable geodesic flow on $\mathbb{RP}^2$ that is a quotient by involution of the geodesic flow on $S^2$, considered in the paper of E. O. Kantonistova~\cite{Kant}. Namely, we calculate all the Fomenko-Zieschang invariants (that is, labeled molecules) of this system (see Theorem~\ref{Th:main1}).

The results are based on the theory of topological classification of integrable Hamiltonian systems, created by A.\,T.~Fomenko and his school (see \cite{BolFom}). The foundations of this theory were laid in \cite{FomenkoMorseTh86, FomenkoIsoTop86, FomenkoZieschang87, FomenkoFunk88}.

For more details on the Liouville classification (that is, on the calculation of the Fomenko--Zieschang invariants) for geodesic flows, see \cite{BolFom}. In particular, the Fomenko--Zieschang invariants for a linearly integrable geodesic flow on $\mathbb{RP}^2$ without potential were calculated by V.\,S.~Matveev (see \cite[Volume 2, Section 3.4]{BolFom}). In \cite {Kant} E.\,O.~ Kantonistova calculated all labeled molecules of integrable geodesic flows on surfaces of revolution homeomorphic to the sphere $S^2$ with an invariant potential and a linear integral. D.\,S.~Timonina in \cite{Tim} described all possible labeled molecules for geodesic flows on $\mathbb{RP}^2$ that are a quotient of the systems from \cite{Kant} by an involution. We briefly describe these systems on $S^2$ and $\mathbb{RP}^2$.

\subsection{Linearly integrable geodesic flows on $S^2$}

Consider a Riemannian manifold of revolution $M=S^2$ with natural coordinates $(r;\varphi), r \in (0;L), \varphi \in \mathbb{R}/2\pi\mathbb{Z}$, in which the rotation metric is written as \[\mathrm{d}s^2=\mathrm{d}r^2+f^2(r)\mathrm{d}\varphi^2.\] In a neighbourhood of the poles consider local coordinates \[x=f(r)\cos{\varphi},\qquad y=f(r)\sin{\varphi}.\] The function $f(r)$ satisfies the conditions of the following lemma, therefore the metric at the poles is $\mathrm{d}s^2=\mathrm{d}x^2+\mathrm{d}y^2$.

\begin{lemma}[A.~Besse, \cite{besse}] \label{L:Besse} A metric on a manifold of revolution $M$ and a function $V(r)$ invariant under rotations are smooth at the poles (that is at the points $r=0$ and $r = L$) if there exist $F=F(r)$ and $W=W(r)$ defined on the whole number line such that $F|_{\left[0;L\right]}=f, W|_{\left[0;L\right]}=V$ and the following conditions are satisfied:
\begin{enumerate}
  \item $F(-r)=-F(r)=F(2L-r)$,  that is the functions $F(r)$ and $F(L+r)$ are odd (or, equivalently, the function $F(r)$
  is periodic with a period $2L$ and odd) and $F'(0)=1, F'(L)=-1$;
  
   \item $W(-r)=W(r)=W(2L-r)$, that is the functions $W(r)$ and $W(L+r)$ are even  (or, equivalently, the function  $W(r)$ is  periodic with a period $2L$ and even).
\end{enumerate}
\end{lemma}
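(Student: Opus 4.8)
\section*{Proof proposal}

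The plan is to localize the problem at a single pole, say $r=0$, and to base everything on the following classical fact about invariant functions: a smooth function defined near the origin of $\mathbb{R}^2$ that is invariant under all rotations about the origin is exactly a smooth function of $u^2+v^2$; equivalently, a radial function $g(r)$ of $r=\sqrt{u^2+v^2}$ lifts to a smooth function of $(u,v)$ near the origin if and only if $g$ extends to an \emph{even} smooth function of $r$. The south pole $r=L$ is then handled by the reflection $r\mapsto 2L-r$, which interchanges the two poles and carries every statement about the behaviour at $r=0$ into the corresponding statement at $r=L$. This is precisely why the conditions appear in the symmetric pairs $F(-r)=-F(r)=F(2L-r)$ and $W(-r)=W(r)=W(2L-r)$: oddness (resp. evenness) about $0$ together with oddness (resp. evenness) about $L$ forces the $2L$-periodicity of $F$ (resp. $W$), since reflecting about $0$ and then about $L$ is translation by $2L$. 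Throughout I would work in geodesic normal coordinates $u=r\cos\varphi$, $v=r\sin\varphi$, which are legitimate smooth coordinates at the pole because $r$ is arclength along meridians, hence the geodesic distance to the pole.

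The potential is the easy half. Near the pole $V$ is a function of the point that depends only on the distance $r$ to the pole, so in the coordinates $(u,v)$ it becomes the radial function $V(\sqrt{u^2+v^2})$. By the invariant-function fact, $V$ extends smoothly across the pole if and only if it extends to an even smooth function of $r$; taking $W$ to be that even extension gives condition~(2) at $r=0$, and the reflection $r\mapsto 2L-r$ supplies $W(2L-r)=W(r)$ at the other pole.

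For the metric I would compute directly in normal coordinates. Using $du^2+dv^2=dr^2+r^2\,d\varphi^2$ and $u\,dv-v\,du=r^2\,d\varphi$, one rewrites
\[
ds^2=dr^2+f(r)^2\,d\varphi^2=(du^2+dv^2)+\frac{f(r)^2-r^2}{r^4}\,(u\,dv-v\,du)^2 .
\]
Since $(u\,dv-v\,du)^2=u^2\,dv^2-2uv\,du\,dv+v^2\,du^2$ has coefficients that are homogeneous quadratic polynomials, the metric is smooth at the pole if and only if the radial coefficient $\mu(r):=(f(r)^2-r^2)/r^4$ is a smooth function of $(u,v)$, i.e. (again by the invariant-function fact) a smooth even function of $r$. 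Writing $f^2=r^2\bigl(1+r^2\mu(r)\bigr)$ and taking the positive square root shows that $\mu$ is smooth and even precisely when $f$ extends to an odd smooth function with $f'(0)=1$; this is condition~(1) at $r=0$, and it is exactly the condition making the paper's chart $x=f(r)\cos\varphi$, $y=f(r)\sin\varphi$ smoothly equivalent to $(u,v)$ and the metric equal to $dx^2+dy^2$ at the pole. The reflection $r\mapsto 2L-r$ converts $f(0)=0$, $f'(0)=1$, $f$ odd into $f(L)=0$, $F'(L)=-1$, $F(2L-r)=-F(r)$, finishing the south pole. I expect the main obstacle to be the \emph{necessity} direction of the metric computation: extracting the smoothness and evenness of $\mu$ (equivalently the oddness of $f$) from the smoothness of the full metric tensor, where the angular factor $(u\,dv-v\,du)^2$ must be disentangled by recombining the components $g_{uu},g_{vv},g_{uv}$ and invoking the invariant-function theorem to control the angular dependence.
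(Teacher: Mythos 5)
The paper does not prove this lemma at all: it is quoted as a known result of Besse and used as a black box, so there is no in-paper argument to compare against. Judged on its own merits, your proposal follows the standard route (essentially the one in Besse's book) and is sound for what the lemma actually asserts. Note that the statement is only a sufficiency claim (``\dots are smooth at the poles \emph{if} there exist $F$ and $W$\dots''), and your sufficiency direction is complete: $F$ odd with $F'(0)=1$ gives $F(r)^2=r^2G(r^2)$ with $G(0)=1$, hence $\mu(r)=(F^2-r^2)/r^4=(G(r^2)-1)/r^2$ is a smooth function of $u^2+v^2$ by the Hadamard/Whitney argument, and then $ds^2=(du^2+dv^2)+\mu\,(u\,dv-v\,du)^2$ is visibly smooth; the reflection $\rho=L-r$ turns $F(2L-r)=-F(r)$, $F'(L)=-1$ into the same data at the south pole; the claim for $W$ is the easy radial-function case. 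The ``main obstacle'' you identify --- recovering smoothness and evenness of $\mu$ from smoothness of $g_{uu},g_{vv},g_{uv}$ --- belongs to the converse, which the lemma does not claim, so you may simply drop it (it is genuinely the delicate half, but irrelevant here). Two small points worth making explicit: (i) the coordinates $u=r\cos\varphi$, $v=r\sin\varphi$ and the paper's $x=f(r)\cos\varphi$, $y=f(r)\sin\varphi$ define the same smooth structure at the pole exactly because $f(r)/r$ extends to a smooth positive even function under hypothesis (1), so you should say which chart is taken as the definition of smoothness before using the other; (ii) the ``classical fact'' you invoke (a rotation-invariant smooth germ on $\mathbb{R}^2$ is a smooth function of $u^2+v^2$, equivalently an even smooth function of $r$) is Whitney's theorem and deserves a citation, since it is the only nontrivial analytic input.
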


Consider an integrable Hamiltonian system on the cotangent bundle  $T^*S^2$ with Hamiltonian\begin{equation} \label{Eq:Ham}  H = \begin{cases} \displaystyle \frac{p^2_{r}}{2}+\frac{p^2_{\varphi}}{2f^2(r)}+V(r),  \qquad & \text{outside the poles  (i.e. $r \not =0, L$),} \\ \displaystyle \frac{p^2_{x}+p^2_{y}}{2}+V(0), \qquad & \text{at the poles (i.e. $r = 0$ or $L$)} \end{cases}  \end{equation} 
and first integral  \begin{equation} \label{Eq:Int}  K= \begin{cases} p_{\varphi}, \qquad  &\text{outside the poles,} \\ xp_{y}-yp_{x}, \qquad & \text{near the poles.}  \end{cases} \end{equation}

\begin{remark}[\cite{Kant}] \label{Rem:K_Zero} At the poles (that is at $x=y =0$) $K=0$. For  $K=0$  the system does not have critical points of rank $1$ on nonsingular isoenergy surfaces $Q^3_h$. Moreover, the system on $T^*S^2$ has exactly two points of rank $0$: these are  $(0,N)$ and $(0,S)$, where $N$ and $S$ are the poles of the sphere. \end{remark}

Consider a Liouville torus of the system $H = h, K=k$  that does not contain points of the poles. Then it consists of the points $(p_{r}, k, r, \varphi)$, $\varphi \in \mathbb{R}/2\pi\mathbb{Z}$, satisfying the equation \begin{equation} \label{Eq:Pr_LiouvTorus} p_{r}=\pm \frac{1}{f{(r)}}\sqrt{U_{h}(r)-k^2},\end{equation} where \begin{equation} \label{Eq:Uhr} U_{h}(r) =2f^2(r)(h-V(r)).\end{equation}

\begin{remark}  In \cite{Kant} it is explained how to construct a labeled molecule of the system by the graph of the function  $U_h (r)$ (see also \cite{BolFom} and \cite{Tim}). The molecule is always symmetric about the level $k=0$ and has the form $W - W$. Informally speaking, the Liouville foliation for the ``half of the molecule'' $W$ is obtained if we take the ``volumetric mountain relief'' of the function  $U_h(r)$, stratify it into the level sets $U_h = \operatorname{const}$ and then multiply it by the circle $S^1$. \end{remark}

\begin{theorem}[E.\,O.~Kantonistova  \cite{Kant}] Consider a system (that is, a geodesic flow with a linear integral and with a potential invariant under rotations) on a manifold of revolution $M\approx S^2$, given by a pair of functions $(V(r);f(r))$. Let $Q\subseteq Q^{3}_{h}$ be the connected component of a nonsingular isoenergy surface on which 
$K$ is a Bott function. Then:

\begin{enumerate}
  \item the molecule of the system is symmetric (without taking into account the orientation on the edges) with respect to the $h$  axis, and the orientation on the edges is set in the direction of increasing $k$. That is, the molecule has the form $W-W$, where the edge connecting the molecules is a one-parameter family of Liouville tori, and each  molecule $W$  is either a single atom $A$ or a tree. All non-terminal (that is saddle) vertices of the tree are atoms $V_{l}$ the terminal ones are of type $A$.  Moreover, for $k>0$  there is one  incoming edge for each atom $V_{l}$ and there are $l$ outgoing edges (for $k <0$, the picture is antisymmetric, that is, without taking into account the orientation on the edges the molecule is symmetric with respect to $h$, and the orientations on the pieces $W_{+} = W(k>0)$ and $W_{-} = W(k<0)$ are opposite).

\item

\begin{enumerate}
  \item  Labels on the edges of the type $A-V_{l}$ of the molecule are $r=0, \varepsilon=+1$.
  \item Labels on the edges of the type  $V_{s}-V_{l}$  where both saddle atoms are in the same half-plane ($k>0$ or $k<0$) are $r=\infty, \varepsilon=+1$.
  \item Labels on the central edge of the type $V_{l}-V_{l}$ (symmetric with respect to the level  $k=0$) are  $r=\infty, \varepsilon=-1$.
   
  \item \label{Item:SphereMolAA} If the molecule $W-W$ is of the type $A-A$, then the label $r$  is defined as follows: we cut the manifold $M^4$ along the surface $Q^3$  into two pieces  $M^4_{-}$ and $M^4_{+}$ (recall that  $Q^3$ is a connected component of $Q^3_{h}$).  The piece  $M^4_{-}$ that  corresponds to strictly lower energy values than $h$ can contain $2$, $1$ or $0$  singular points of rank zero. Then, respectively, $r=\frac{1}{2}$, $r=0$, or $r=\infty$. In all three cases,  $\varepsilon=+1$.
  
  \item If the molecule  $W-W$  is different from $A-A$, then it contains a single family that is obtained by dropping all the atoms $A$. The label $n$ in this case is equal to the number of singular points of rank zero on the manifold $M^4_{-}$ (see item~\ref{Item:SphereMolAA}).
\end{enumerate}
\end{enumerate}
\end{theorem}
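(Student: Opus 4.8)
The plan is to reduce everything to the geometry of the single function $U_h(r)$ from \eqref{Eq:Uhr} on $[0,L]$ together with the global rotational $S^1$-symmetry of the system. First I would set up the dictionary between the analytic picture and the Liouville foliation. By \eqref{Eq:Pr_LiouvTorus} the preimage of a regular value $(h,k)$ with $k\neq 0$ is, in the $(r,p_r)$-plane, the union of the ovals $f^2(r)\,p_r^2=U_h(r)-k^2$ lying over the connected components of $\{r\in(0,L):U_h(r)\ge k^2\}$, each oval giving one Liouville torus after multiplication by the $\varphi$-circle. Since $f(0)=f(L)=0$ by Lemma~\ref{L:Besse}, we have $U_h(0)=U_h(L)=0$, so for $k\neq0$ these components are closed subintervals of $(0,L)$ and the poles are excluded, in agreement with Remark~\ref{Rem:K_Zero}. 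Hence the bifurcations occur exactly at the critical values of $U_h$, and the atoms are read off from the local model of the singular curve $f^2 p_r^2=U_h-c$: a nondegenerate local maximum of $U_h$ is a shrinking oval, i.e.\ an elliptic critical circle and a terminal atom $A$, while a nondegenerate local minimum produces a transverse self-crossing, i.e.\ a saddle. If $l-1$ local minima share one critical value $c$ on a common connected level, the singular fibre carries $l-1$ critical circles, with one incoming torus for $k^2<c$ and $l$ outgoing tori for $k^2>c$; this is precisely the atom $V_l$.

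Next I would assemble the molecule. The half-molecule $W$ is the Kronrod tree of $U_h$ with the leaves labelled $A$ and the interior vertices labelled $V_l$, oriented towards increasing $k^2$; the involution $k\mapsto-k$ induced by $p_\varphi\mapsto-p_\varphi$ identifies the two halves and produces the symmetric shape $W-W$ with a single central family of meridian tori across $k=0$ that pass over both poles. This establishes part~(1). For the marks the decisive tool is the globally defined cycle $\lambda$ given by the orbits of the rotation action: $\lambda$ is a well-defined basis cycle on every Liouville torus, it is the admissible axis on every saddle atom (the critical circles are $\varphi$-orbits), and it is the core circle of the solid torus of every atom $A$, whose contractible meridian is instead the vanishing oval, transverse to $\lambda$.

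I would then fix admissible coordinate systems adapted to $\lambda$ and compute the gluing matrices between neighbouring atoms. On an edge $A-V_l$ the contractible cycle of $A$ is identified with the complementary cycle on the $V_l$-side, so the gluing matrix interchanges the two basis cycles and gives $r=0$; on an edge $V_s-V_l$ the two admissible axes both equal $\lambda$, the relevant off-diagonal entry of the gluing matrix vanishes, and $r=\infty$. In both cases the orientations can be matched consistently, so $\varepsilon=+1$. On the central edge $V_l-V_l$ the axes again coincide, whence $r=\infty$, but the reflection $k\mapsto-k$ reverses orientation, giving $\varepsilon=-1$; this proves items~(2a)--(2c).

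Finally, for the $A-A$ molecule and the mark $n$ I would use that the only rank-$0$ points are the two poles $(0,N)$ and $(0,S)$ (Remark~\ref{Rem:K_Zero}), which are exactly the fixed points of the lifted $S^1$-action. Cutting $M^4$ along $Q^3$ into $M^4_\pm$, the topology of $M^4_-$, and hence the gluing of the two solid-torus neighbourhoods of the $A$-circles, is determined by how many of these fixed points lie in $M^4_-$, producing $r=\tfrac12,\,0,\,\infty$ for $2,\,1,\,0$ points respectively; and the mark $n$, being the obstruction to a global Seifert structure compatible with the $S^1$-fibration by $\lambda$, counts the fixed points trapped in $M^4_-$, which is item~(2e). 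I expect the main obstacle to be precisely this mark computation: one has to propagate the frame $(\lambda,\mu)$ consistently through the poles, where $\varphi$ degenerates and the Besse conditions of Lemma~\ref{L:Besse} become essential, and across the non-generic critical levels carrying several circles, while tracking all orientation conventions so that the values $\tfrac12,0,\infty$ of $r$, the signs $\varepsilon$, and the integer $n$ come out exactly as stated.
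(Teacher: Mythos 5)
This theorem is not proved in the paper: it is quoted as a known result from \cite{Kant}, so there is no in-paper proof to compare against line by line. Your overall strategy --- read the shape of the molecule off the Kronrod tree of $U_h$ via \eqref{Eq:Pr_LiouvTorus} and \eqref{Eq:Uhr}, then compute every mark from gluing matrices written in the two globally meaningful cycles $\alpha_r$, $\alpha_\varphi$ supplied by the rotational symmetry --- is exactly the methodology this paper itself deploys in Sections~\ref{S:AdmBasis}--\ref{S:ProofTh1} to prove its own Theorems~\ref{Th:main1} and~\ref{Th:main2}, and your treatment of part (1) and of items (2a)--(2c) (swap matrix $\left(\begin{smallmatrix}0&1\\1&0\end{smallmatrix}\right)$ on $A$--$V$ edges giving $r=0$, coincidence of the $\lambda$-axes on $V$--$V$ edges giving $r=\infty$, reversal of the orientation of $\operatorname{sgrad}H$ on critical circles under $k\mapsto-k$ giving $\varepsilon=-1$ only on the central edge) is sound.

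The genuine gap is in items (2d) and (2e). There you assert that the $r$-label of an $A$--$A$ molecule and the mark $n$ are ``determined by'' the number of rank-zero points in $M^4_-$, and you justify the values $\tfrac12,0,\infty$ and the count for $n$ by appealing to the topology of $M^4_-$ and to the Seifert--Euler-number heuristic. This is circular as written: the one computation on which everything hinges --- how the cycle $\alpha_r$ (the vanishing meridian of each atom $A$, and the $\mu$-cycle of the saddle atoms) is continued through the level $k=0$ when the Liouville tori there pass over a pole, where the coordinates $(r,\varphi)$ degenerate --- is never carried out. The required relation is $\alpha_{r+}=\alpha_{r-}-\alpha_\varphi$ for each pole enclosed by $M^4_-$; it is exactly what this paper establishes in its own proof (formulas \eqref{Eq:BasisThroughPole} and \eqref{Eq:BasisThroughPole2}, verified on the local model $H=\frac{p_x^2+p_y^2}{2}+\frac{x^2+y^2}{2}$, $K=xp_y-yp_x$ near a rank-zero point). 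Only this relation converts the count $m=0,1,2$ of poles in $M^4_-$ into gluing matrices of the form $\left(\begin{smallmatrix}1&m\\0&-1\end{smallmatrix}\right)$, hence into $r=\infty,0,\tfrac12$ and, through the interior-edge contribution $-\gamma/\alpha$ in the formula for $n$, into the stated value of $n$. You yourself flag the propagation of the frame $(\lambda,\mu)$ through the poles as the unresolved ``main obstacle''; that is precisely the step that must be supplied before (2d) and (2e) can be considered proved.
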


\begin{remark} \label{Rem:TopQAA} Consider the molecule $A-A$. The label  $r=\frac{1}{2}, 0, \infty$  or $r = \frac{p}{q}$ (where $p<q$ or $q \geq 3$) if and only if the corresponding surface  $Q^3$ is homeomorphic to $\mathbb{RP}^3$,  $S^{3}$, $S^{1}\times S^{2}$ or the lens space $L_{q, p}$ respectively (see \cite[Volume 1, Proposition 4.3]{BolFom}\end{remark}

\subsection{Linearly integrable geodesic flows on $\mathbb{RP}^2$}

Consider a projective plane $\mathbb{RP}^2$ as a quotient of a sphere  $S^2$ by the involution $\eta$ that in the coordinates $r,\varphi$ is given by the formula: \begin{equation} \label{Eq:Invol} \eta (r, \varphi) = (L-r, \varphi + \pi).\end{equation} The poles switch places under the involution:  $\eta (S)=N,\eta(N)=S$.  We have:   $T^{*}\mathbb{RP}^2=T^{*}S^2/\eta ^{*}$, where \begin{equation} \label{Eq:CotangInvolution} \eta^{*}(p_{r},p_{\varphi},r,\varphi)=(-p_{r},p_{\varphi},L-r,\varphi+\pi).
 \end{equation} Further we assume that  $f(r)=f(L-r)$ and $V(r) = V(L-r)$ so that $f$  and  $V$ defined functions on  $\mathbb{RP}^2$.

 \begin{theorem}[D.\,S.~Timonina, \cite{Tim}] Let the system (that is, a geodesic flow with a linear integral on manifold of revolution with invariant potential) on the projective plane $\mathbb{RP}^2$ be given by a pair of functions $f(r)$ and $V(r)$ as described above. Then the molecule of the system corresponding to the connected component of the isoenergetic surface $Q^{3}_{\mathbb{RP}^2}$ is symmetric and has the form $W-W$,  where each molecule $W$  is either a single atom $A$ or a tree. All non-terminal (that is saddle) vertices of the tree are atoms $V_{l},V^{*}_{k}, A^{*}$ and the end vertices are of the type $A$. Labeled molecule are constructed by the graph of the function $U_{h}(r)=2f^2(r)(h-V(r))$. \end{theorem}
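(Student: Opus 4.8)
The plan is to realise the $\mathbb{RP}^2$-system as the $\mathbb{Z}_2$-quotient of the $S^2$-system studied by Kantonistova, and to read off the molecule downstairs from the molecule upstairs by analysing how the involution acts on the Liouville foliation. First I would check that $\eta^*$ from \eqref{Eq:CotangInvolution} is a \emph{free symplectic} involution of $T^*S^2$ preserving both $H$ and $K$. Symplecticity is automatic, since $\eta^*$ is the cotangent lift of the diffeomorphism $\eta$ of \eqref{Eq:Invol}; freeness follows from the fact that $\eta$ itself is free (it is the deck transformation of the double cover $S^2\to\mathbb{RP}^2$), so its lift moves the base point of every covector. The invariance $H\circ\eta^*=H$ and $K\circ\eta^*=K$ is a direct computation using $p_r\mapsto -p_r$, $p_\varphi\mapsto p_\varphi$ together with the symmetry assumptions $f(r)=f(L-r)$, $V(r)=V(L-r)$; in particular $U_h(r)=U_h(L-r)$, so the profile \eqref{Eq:Uhr} is symmetric about $r=L/2$. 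Consequently the Liouville foliation of $T^*S^2$ descends to that of $T^*\mathbb{RP}^2=T^*S^2/\eta^*$, and the quotient map is a fibrewise double cover of foliations. Since $\eta^*$ fixes $k=p_\varphi$, the construction is compatible with the $k\mapsto -k$ symmetry, so the $W-W$ shape about $k=0$ established by Kantonistova is inherited by the quotient; it therefore suffices to describe one half-molecule $W$.

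Next I would recall the rule of Kantonistova (\cite{Kant}) building the $S^2$-molecule from the graph of $U_h$: local maxima of $U_h$ give the terminal atoms $A$ (elliptic critical circles where the Liouville tori collapse), while local minima give the saddle atoms $V_l$. The reflection $r\mapsto L-r$ induced by $\eta$ is a symmetry of this graph, so the critical points of $U_h$ split into reflection-swapped pairs $\{r_0,L-r_0\}$ of equal critical value, together with the single fixed critical point $r=L/2$, which is always critical because $U_h'(L/2)=0$ by symmetry. The half-molecule $W$ for $\mathbb{RP}^2$ is then the quotient of the half-molecule $W$ for $S^2$ by this reflection, modified at the central fibre. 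For an off-centre critical value the two symmetric critical circles lie in disjoint components of the singular fibre and are interchanged by $\eta^*$; the quotient of two swapped atoms retains a single copy, so a swapped pair of terminal atoms $A$ descends to an $A$, and a swapped pair of saddle atoms descends to a saddle atom of the same type $V_l$ (each boundary torus downstairs is then covered one-to-one by a torus from each copy).

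The main work — and the point I expect to be the chief obstacle — is the central fibre over $r=L/2$. Here $\eta^*$ maps the critical circle $\{r=L/2,\ p_r=0,\ \varphi\in S^1\}$ to itself, acting on it by the \emph{free} rotation $\varphi\mapsto\varphi+\pi$ while reversing $p_r$ and the local $r$-coordinate. Thus the relevant atom upstairs is $\eta^*$-invariant, and $\eta^*$ acts on it freely but nontrivially, so the quotient atom acquires a nonorientable (starred) structure. I would identify it through the standard presentation of atoms as $S^1$-fibrations over a neighbourhood of the vertex (see \cite{BolFom}): when $L/2$ is a simple minimum the atom upstairs is $V_2$ (the atom $B$), its two upper tori are swapped and its lower torus is covered two-to-one, so the quotient has one lower and one upper boundary torus and is precisely $A^*$; the degenerate minima give $V_{2k}$ upstairs with its upper tori paired off, whose quotient is $V_k^*$; and a maximum at $L/2$ quotients to a terminal $A$. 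Carrying this out rigorously means verifying that the $\mathbb{Z}_2$-action on the graph-times-$S^1$ singular fibre is free and computing the homeomorphism type of the quotient, which is exactly what pins down the appearance of the starred atoms $A^*$ and $V_k^*$.

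Assembling these local computations shows that each half-molecule $W$ is either a single atom $A$ or a tree whose terminal vertices are atoms $A$ and whose interior (saddle) vertices are atoms $V_l$, $V_k^*$ or $A^*$, all read off from the graph of $U_h$; together with the inherited $W-W$ symmetry this is the assertion of the theorem. The delicate quantitative content — the exact labels $r$, $\varepsilon$, $n$ on this molecule — is not claimed here and is precisely what the remainder of the paper is devoted to computing.
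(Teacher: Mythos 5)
This statement is quoted from Timonina's paper \cite{Tim} and is not proved in the present paper, so there is no in-text proof to compare against; I can only assess your argument on its own terms. Your overall strategy --- realise the $\mathbb{RP}^2$-system as the quotient of the $S^2$-system by the free symplectic involution $\eta^*$, check invariance of $H$ and $K$, and descend the Liouville foliation leaf by leaf --- is the natural route and is essentially the one used in \cite{Tim}; the verification of freeness, symplecticity, the symmetry $U_h(r)=U_h(L-r)$, and the treatment of the critical circle over $r=L/2$ (free action $\varphi\mapsto\varphi+\pi$ on the circle, petals of the figure-eight swapped, quotient $B\mapsto A^*$, $V_{2k}\mapsto V_k^*$) are all correct.

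There is, however, one concrete false step: the claim that for an off-centre critical value ``the two symmetric critical circles lie in disjoint components of the singular fibre.'' A singular saddle leaf at level $k^2=c$ is determined by a connected component of $\{r: U_h(r)\ge c\}$, and that component contains \emph{both} local minima $r_0$ and $L-r_0$ whenever $U_h\ge c$ on the whole interval $(r_0,L-r_0)$ (for instance when $U_h$ has a single hump with maximum at $L/2$ above the level $c$). In that case the two off-centre critical circles sit on one $\eta^*$-invariant leaf --- upstairs a single atom $V_3=D_1$, say, rather than two swapped copies of $B$ --- and the quotient is an ordinary $V_l$ with fewer vertices, not ``a single copy'' of a swapped atom. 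The correct dichotomy is by leaves, not by critical points: a saddle leaf is $\eta^*$-invariant iff its $r$-support contains $L/2$, and a star appears in the quotient iff that invariant leaf actually has a critical circle \emph{at} $r=L/2$; invariant leaves all of whose critical circles are off-centre are permuted freely in pairs and descend to unstarred $V_l$'s. Your final conclusion (only $A$, $V_l$, $V_k^*$, $A^*$ occur, assembled into a symmetric $W-W$ molecule read off from the graph of $U_h$) survives this correction, but the case analysis as written omits these invariant leaves and would need to be repaired before the argument is complete.
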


 \begin{remark}  Some properties of the function $U_h(r)$:
 
 \begin{enumerate}
 
 \item It follows from the symmetries of the functions  $f$ and $V$  that the function $U_{h}(r)$  is symmetric about the axis  $r = \frac{L}{2}$. In other words, the function $U_{h}(r-\frac{L}{2})$ is even.
 
 \item  At the poles  $U_h(0) = U_h (L) =0$. 
 
 \item  For $r \not = 0, L$  the function  $U_h(r)>0$  if and only if  $V(r) < h$. 
 
 \item Also note that for $K=0$ and $r \not =0, L$ the zeros of $U_h(r) =0 $  cannot be points of the local extremum of the function $U_h(r)$. Otherwise, at these points  $p_r=0$ and $\frac{\partial H}{\partial r} =0$, and these would be points of rank  $1$. But on nonsingular surfaces with $K=0$ there are no points of rank  $1$ (see Remark~\ref{Rem:K_Zero}).

 \end{enumerate}
 
  \end{remark}

It is easy to prove the following statement about the topology of $Q^3_{\mathbb{RP}^2}$.  Let $\pi: Q^3_h \to \mathbb{RP}^2$ be the projection of the isoenergy surface. Note that the points of $\pi(Q^3_h)$ are exactly the points where $V(r) \leq h$. The set $\pi(Q^3_h)$ is symmetric with respect to rotation, so each its connected component is either all $\mathbb{RP}^2$, or a ``cap'' at the pole, or a Mobius strip (a neighborhood of the ``equator''), or a ring.

 \begin{proposition}\label{A:IsoTop} Let $Q^3$ be the connected component of $Q^3_h$ of a nonsingular isoenergy surface for the system under consideration on $\mathbb{RP}^2$. There are three possible cases:

 \begin{enumerate}
 
 \item \label{I:ProjAll} If $\pi(Q^3) = \mathbb{RP}^2$, that is $V(r) \leq h$ on the whole $\mathbb{RP}^2$, then $Q^3 \approx L_{4, 1}$.
 
 \item If $\pi(Q^3)$ is a disk $D^2$ centered at the pole, then $Q^3 \approx S^3$.
 
 \item If $\pi(Q^3)$ is a ring $I^1 \times S^1$ or a Mobius strip  $\mathbb{M}^2$, then $Q^3 \approx S^1 \times S^2$. 

 \end{enumerate}
 
  \end{proposition}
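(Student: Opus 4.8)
The plan is to study the projection $\pi \colon Q^3 \to \Omega$, where $\Omega := \pi(Q^3) \subseteq \mathbb{RP}^2$, and to read off the topology of $Q^3$ from that of $\Omega$ together with the behaviour of the fibres. The key observation is that over an interior point of $\Omega$ (where $V(r) < h$) the fibre is the circle of covectors of fixed length $|p|^2 = 2(h - V(r)) > 0$, whereas over a boundary point (where $V(r) = h$) the only covector with $H = h$ is the zero covector, so the fibre degenerates to a single point. Thus $Q^3$ is the unit cotangent circle bundle over $\operatorname{int}\Omega$, compactified by collapsing the fibre circle over each boundary circle of $\Omega$. Nonsingularity of $Q^3_h$ guarantees that this collapse is the standard smooth one (a regular circle fibre shrinking to a point), so all three assertions reduce to identifying this collapsed circle bundle.

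First I would treat the two cases with orientable base by a genus-one Heegaard decomposition. Over a collar $[0,\varepsilon]\times S^1_\varphi$ of a boundary circle, collapsing the fibre $S^1_{\mathrm{dir}}$ over the end $\{0\}$ turns $[0,\varepsilon]\times S^1_{\mathrm{dir}}$ into a disk $D^2_{(r,\mathrm{dir})}$ and produces a solid torus $D^2_{(r,\mathrm{dir})}\times S^1_\varphi$ whose meridian is the direction circle $S^1_{\mathrm{dir}}$. In the disk case ($\Omega \approx D^2$) the complement of this solid torus is the circle bundle over a slightly smaller disk, i.e.\ another solid torus whose meridian is the base circle $S^1_\varphi$; the two meridians intersect once on the splitting torus, so the gluing sends the meridian of one torus to the longitude of the other and $Q^3 \approx S^3$. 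In the annular case the two boundary collars give two solid tori joined by a product region $T^2 \times I$, and now both meridians equal $S^1_{\mathrm{dir}}$; gluing meridian to meridian yields $Q^3 \approx S^1 \times S^2$, the $S^1$ factor being the rotation circle $S^1_\varphi$.

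The Möbius case is the first genuine difficulty, since the base is non-orientable and the direction bundle is twisted. Here I would pass to the orientation double cover: the preimage of the Möbius neighbourhood $\mathbb{M}^2$ of the equator under $S^2 \to \mathbb{RP}^2$ is an annulus $A \subset S^2$, and the corresponding piece $\widetilde{Q}^3$ of $Q^3_{S^2}$ is $S^1 \times S^2$ by the annular case above. Then $Q^3 \approx \widetilde{Q}^3 / \eta^*$ for the free involution $\eta^*$ of \eqref{Eq:CotangInvolution}. In the identification $\widetilde{Q}^3 = S^1_\varphi \times S^2_{(r,\mathrm{dir})}$, the map $\eta^*$ acts as the rotation $\varphi \mapsto \varphi + \pi$ on the first factor (which is free) and as an \emph{orientation-preserving} involution $\sigma$ of the sphere $S^2_{(r,\mathrm{dir})}$ (it swaps the two poles $r \leftrightarrow L-r$ and reflects the direction $p_r \mapsto -p_r$, two orientation-reversing moves whose composite preserves orientation). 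Hence $Q^3$ is the mapping torus of $\sigma$; since every orientation-preserving homeomorphism of $S^2$ is isotopic to the identity, this mapping torus is $S^1 \times S^2$. (Equivalently, $Q^3$ is orientable, so the free quotient is either $S^1\times S^2$ or $\mathbb{RP}^3 \# \mathbb{RP}^3$, and the mapping-torus description rules out the latter.)

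Finally, when $\Omega = \mathbb{RP}^2$ there is no boundary to collapse and $Q^3$ is the unit cotangent (equivalently unit tangent) bundle $ST\mathbb{RP}^2$. I would identify it by iterated covers: the derivative of the free involution $\eta$ makes $STS^2 \to ST\mathbb{RP}^2$ a double cover, and $STS^2 = \mathbb{RP}^3$ is in turn doubly covered by $S^3$, so the composite realizes $S^3$ as a four-fold covering of $Q^3$. Hence $\pi_1(Q^3)$ is a group of order $4$ acting freely on $S^3$; since $\mathbb{Z}/2 \oplus \mathbb{Z}/2$ admits no free action on $S^3$, we get $\pi_1(Q^3) = \mathbb{Z}/4$, so $Q^3$ is a lens space with cyclic fundamental group of order $4$, namely $L_{4,1}$ (the only such lens space up to homeomorphism, as $L_{4,3}\approx L_{4,1}$). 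The delicate points of the whole argument are precisely this identification of the deck group as $\mathbb{Z}/4$ rather than $\mathbb{Z}/2 \oplus \mathbb{Z}/2$, and the twisting in the Möbius case; the disk and annulus cases are routine solid-torus bookkeeping.
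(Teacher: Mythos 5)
Your argument is correct, and in the M\"obius case it is essentially the paper's own proof: both pass to $Q^3_{S^2}\approx S^1\times S^2$ and quotient by $\eta^*$, observing that the induced involution of the $S^2$ factor, $(p_r,p_\varphi,r)\mapsto(-p_r,p_\varphi,L-r)$, is a composition of two reflections, hence orientation-preserving and isotopic to the identity, so the mapping torus is again $S^1\times S^2$. Where you genuinely diverge is in the remaining cases. The paper simply defers the disk and annulus cases to the known sphere computation in Kantonistova's paper, whereas you give a self-contained Heegaard/solid-torus argument; that is a fine trade of brevity for completeness, with one small imprecision: the meridian of the inner solid torus over a disk containing a pole is not the base circle $S^1_\varphi$ in the rotating frame $(\partial_r,\partial_\varphi)$ but $S^1_\varphi\pm S^1_{\mathrm{dir}}$ (a covector of constant absolute direction rotates once relative to the polar frame); since this still meets the other meridian $S^1_{\mathrm{dir}}$ once, the conclusion $Q^3\approx S^3$ stands. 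For $\pi(Q^3)=\mathbb{RP}^2$ the paper just cites the classical identification $ST\mathbb{RP}^2\approx L_{4,1}$ (Geiges--Lange), while you rederive it via the four-fold cover $S^3\to STS^2\to ST\mathbb{RP}^2$ and the fact that $\mathbb{Z}/2\oplus\mathbb{Z}/2$ acts freely on no sphere. That correctly pins down $\pi_1(Q^3)=\mathbb{Z}/4$, but the final step ``a closed $3$-manifold with universal cover $S^3$ and cyclic $\pi_1$ of order $4$ is a lens space'' is not free of charge in general (it is the spherical space form problem); you should either note that here the deck group acts by isometries of the round $S^3$ (everything is induced from the isometric $\mathbb{Z}/2$-action on $STS^2=SO(3)$ with its bi-invariant metric, so the quotient is a linear space form, necessarily $L_{4,1}\approx L_{4,3}$), or simply fall back on the cited classical fact as the paper does. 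With that one step shored up, your proof is complete.
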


 \begin{remark} In the cases of Proposition~\ref{A:IsoTop} the corresponding surface for the sphere  $Q^3_{S^2}$  is homeomorphic to $\mathbb{RP}^3$, $S^3$ and   $S^1 \times S^2$ respectively (see \cite{Kant}).   \end{remark}

\begin{proof}[Proof of Proposition~\ref{A:IsoTop}] If $\pi(Q^3) = \mathbb{RP}^2$,  then it is easy to see that  $Q^3$  is homeomorphic to the bundle of unit tangent vectors to  $\mathbb{RP}^2$.  It is well known that this space is homeomorphic to  $L_{4, 1}$(see, for example, \cite{GeigesLange}). If $\pi(Q^3) \approx D^2$ or $\pi(Q^3) \approx I^1 \times S^1$, then the answer is the same as for the corresponding case on the sphere $S^2$ (see \cite{Kant}).  It remains to consider the case of $\pi(Q^3) \approx \mathbb{M}^2$. In this case, $Q^3 \approx Q^3_{S^2}/\eta^*$,   where the involution $\eta^*$ is given by \eqref{Eq:CotangInvolution} and  $Q^3_{S^2}\approx S^1 \times S^2$. The involution $\eta^*$ acts on $Q^3_{S^2}$ as follows: 
it increases the coordinate $\varphi$ on $S^1$ by $\pi$,  and the map on the factor $S^2$  is isotopic to the identity map (in the coordinates $(p_r, p_{\varphi}, r)$  it is easy to see that it is isotopic to the rotation by the angle $\pi$ about the axis $p_{\varphi}$).  Therefore, $Q^3 \approx S^1 \times S^2/\eta^* \approx S^1 \times S^2$. Proposition~\ref{A:IsoTop} is proved. \end{proof}

\begin{remark}  For the molecules of the form  $A-A$  one can derive the label $r$ from the topology of  $Q^3$ (see Remark~\ref{Rem:TopQAA}). However, we will do the opposite: in Section~\ref{S:ProofTh1} in all three cases we first calculate the label $r$  for molecules of the form $A-A$.  And then make sure that the answer is consistent with  Proposition~\ref{A:IsoTop}. \end{remark}

 \section{Main results}
  
 For convenience, we divide the classification of labeled molecules for the considered system into two statements: for the case when the molecule has the form $A-A$, and for all other cases.
 
\begin{theorem}\label{Th:main1}
Consider a natural Hamiltonian system on  $\mathbb{RP}^2$  with Hamiltonian~\eqref{Eq:Ham} and  first integral~\eqref{Eq:Int}. Let $Q^3$ be a connected component of a nonsingular isoenergy surface $Q^3_{h}$. For all $A-A$ molecules the label $\varepsilon =1$ and the label $r$ depends on the projection of  $\pi(Q^3)$ on $\mathbb{RP}^2$ as follows.

\begin{enumerate}

\item \label{Item:RP2LensCase}  If $\pi(Q^3) = \mathbb{RP}^2$, then the label $r = \frac{1}{4}$.

\item \label{Item:RP2SphereCase}  If $\pi(Q^3) \approx D^2$, then the label $r =0$.

\item \label{Item:RP2S1S3Case} In all other cases (that is if $\pi (Q^3) \approx I^1 \times S^1$ or $\pi(Q^3) \approx \mathbb{M}^2$) the label $r =\infty$. 

\end{enumerate}

 \end {theorem}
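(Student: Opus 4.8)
The plan is to reduce every mark to the gluing matrix between the admissible coordinate systems of the two terminal atoms $A$ on the Liouville torus that joins them. For an $A$--$A$ molecule one writes the transition between the admissible frames as $C=\bigl(\begin{smallmatrix}\alpha&\beta\\\gamma&\delta\end{smallmatrix}\bigr)$ with $\det C=-1$; then $r=\tfrac{\alpha}{\beta}\bmod 1$ and $\varepsilon=\operatorname{sign}\beta$ when $\beta\neq0$, while $r=\infty$ and $\varepsilon=\operatorname{sign}\alpha$ when $\beta=0$ (see \cite[Vol.~1]{BolFom}). On the connecting torus I would use the admissible basis $(\lambda,\mu)$ of each atom, where $\lambda$ is the vanishing ``oval'' cycle of the radial oscillation $p_r=\pm f(r)^{-1}\sqrt{U_h(r)-k^2}$ (cf.~\eqref{Eq:Pr_LiouvTorus}) and $\mu$ is the longitude along the critical circle, oriented by the sign of the flow $\dot\varphi=p_\varphi/f^2(r)$. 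Since the molecule is $A$--$A$, the function $U_h$ has a single maximum, so at both ends $k=\pm k_{\max}$ the oval collapses to the critical circle and $\lambda$ is the vanishing cycle at \emph{both} terminal atoms.

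The computation then hinges on a single geometric quantity: how the frame is transported along the edge through the central level $k=0$, and in particular on how many times that level meets a pole. At a pole one has $f(r)\to0$ but $p_r=\sqrt{2(h-V(r))}\neq0$, so the connecting trajectory crosses the pole with $\varphi\mapsto\varphi+\pi$ and a reversal of the radial oscillation; each such crossing twists the transported frame. In the ring and Mobius subcases of~\ref{Item:RP2S1S3Case} the projection $\pi(Q^3)$ stays away from the pole, so on the level $k=0$ the turning points are interior (the function $U_h$ vanishes at interior points, not at the pole), no pole is crossed along the edge, the two copies of $\lambda$ agree, $\beta=0$, and $r=\infty$. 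In the cap case~\ref{Item:RP2SphereCase} the single pole of $\mathbb{RP}^2$ has a neighbourhood isometric to a cap of $S^2$ (recall $\eta$ is free), so the local model and the resulting $Q^3\approx S^3$ coincide with the sphere computation of \cite{Kant}, giving $r=0$.

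The essential case is~\ref{Item:RP2LensCase}, where $V\leq h$ on all of $\mathbb{RP}^2$ and hence $U_h>0$ on the whole interval $(0,L)$. Now the level $k=0$ reaches the pole, and I would carry out the computation first on the double cover $S^2$, where $U_h>0$ on $(0,L)$ forces both turning points to the two poles at $k=0$: the two pole crossings make the transported vanishing cycles of the two atoms differ, the gluing matrix takes the form $(\alpha,\beta)=(1,2)$, so $r_{S^2}=\tfrac12$ and $Q^3_{S^2}\approx\mathbb{RP}^3$ (the round-sphere / unit-tangent-bundle picture). Descending to $\mathbb{RP}^2=S^2/\eta$ via~\eqref{Eq:CotangInvolution}, in torus coordinates $(\psi,\varphi)$ with $\psi$ the oval angle the involution lifts to $(\psi,\varphi)\mapsto(\psi+\pi,\varphi+\pi)$, so the period lattice is enlarged to $\langle(\pi,\pi),(0,2\pi)\rangle$ and the admissible longitude is replaced by the primitive half-cycle $(\pi,\pi)$. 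Re-expressing the $S^2$ gluing matrix in this enlarged lattice changes the pair $(\alpha,\beta)=(1,2)$ into $(1,4)$, giving $r=\tfrac14$; a final comparison with Proposition~\ref{A:IsoTop} and Remark~\ref{Rem:TopQAA} then confirms $Q^3\approx L_{4,1}$.

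In all three cases the governing entries come out positive for the orientation given by increasing $k$, so $\varepsilon=+1$ throughout, in agreement with the $\varepsilon=+1$ labels of~\cite{Kant}. The main obstacle is precisely the framing bookkeeping of case~\ref{Item:RP2LensCase}: one must transport the admissible basis correctly through the two pole crossings on $S^2$ to obtain $\beta=2$, and then descend the cycles correctly through the free involution $\eta^*$ into the enlarged lattice to obtain $\beta=4$. This is where sign and factor-of-two errors are most likely, and it is the step I would write out in full detail.
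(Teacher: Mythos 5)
Your proposal follows essentially the same route as the paper: the paper likewise builds the admissible bases out of the cycles $\alpha_r$ and $\alpha_\varphi$, records how $\alpha_r$ changes across the level $k=0$ according to the number of pole crossings ($\alpha_{r+}=\alpha_{r-}$, $\alpha_{r-}-\alpha_\varphi$, or $\alpha_{r-}-2\alpha_\varphi$), and obtains case~\ref{Item:RP2LensCase} by exactly your lattice enlargement $\mu\mapsto\frac{\lambda+\mu}{2}$ coming from the free involution. The only point to watch is the sign of the first entry after re-expressing the $S^2$ gluing data in the halved basis (the computation gives $(-1,4)$, i.e.\ $r=\frac{3}{4}=-\frac{1}{4}$, rather than $(1,4)$), which is the standard orientation ambiguity $L_{4,3}\approx L_{4,1}$ and does not affect the conclusion.
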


 \begin{remark}   Let the projection of $Q^3$ on $\mathbb{RP}^2$ in the coordinates $(r, \varphi)$ have the form $\pi(Q^3) = [a, b] \times S^1$, where  $[a, b] \subseteq \left[0, \frac{L}{2} \right]$ (and we formally identify all points of the form $(0, \varphi)$). In other words, let  $[a, b] \subseteq \left[0, \frac{L}{2} \right]$be the connected component of the set of points  $r$  at which  $V(r) \leq h$. The molecule of the system on  $Q^3$ is of type $A - A$ if and only if the function  $U_h(r)$ on $[a, b]$ has exactly one positive local extremum (which will be a global maximum), see~\cite{Tim}. And the topology of $\pi(Q^3)$  depends on the form of the segment $[a, b]$ as follows.

 \begin{enumerate}
 
 \item If $a =0, b = \frac{L}{2}$, then $\pi(Q^3) = \mathbb{RP}^2$.
 
 \item If $a =0, b < \frac{L}{2}$, then $\pi(Q^3) \approx D^2$.
 
 \item If $a >0, b = \frac{L}{2}$, then $\pi(Q^3) \approx \mathbb{M}^2$.
 
 \item If $a >0, b < \frac{L}{2}$, then $\pi(Q^3) \approx I^1 \times S^1$.

 \end{enumerate} We also note that if the molecule has the form  $A-A$  and $\pi(Q^3) = \mathbb{RP}^2$ (that is $a =0, b = \frac{L}{2}$), then $U_h\left(\frac{L}{2}\right)$ is the maximum of the function. \end{remark}

\begin{example} The case when the potential  $V(r) =0$ was analyzed by V.\,S.~Matveev (see  \cite[Volume 2, Section 3.4]{BolFom}). In this case, if the molecule has the form  $A-A$, then $V(r) \leq h$ on the whole $\mathbb{RP}^2$. Therefore, the label $r = \frac{1}{4}$ and  $Q^3 \approx L_{4, 1}$.  \end{example}

\begin{theorem}\label{Th:main2} Let a molecule $W-W$ of the system on $Q^3$ be different from $A-A$. Then the labels on it are as follows.

\begin{enumerate}
  \item On  the edges of the type $A-V_{l}$ the label $\varepsilon =1$. If the atom  $A$ is central (see Definition~\ref{Def:AtomACent}), then the label $r=\frac{1}{2}$, otherwise the label $r=0$.

  \item On the edges between saddle atoms the labels $r=\infty$. If the edge is noncentral (that is $k>0$ or $k<0$), the label $\varepsilon=+1$. On the center edge the label $\varepsilon =-1$.

  \item If the molecule $W-W$  is different from  $A-A$, then it contains a single family obtained by discarding all atoms  $A$. The label  $n$ depends on the projection of  $\pi(Q^3)$  on $\mathbb{RP}^2$ as follows.

\begin{enumerate}

\item \label{Item:RP2LabelNRP2Case}  If $\pi(Q^3) = \mathbb{RP}^2$, then  the label $n=-2$.

\item \label{Item:RP2LabelNDiskMobiusCase}  If $\pi(Q^3) \approx D^2$ or $\pi(Q^3) \approx \mathbb{M}^2$, then the label $n=-1$.

\item \label{Item:RP2LabelNCylindCase} In the remaining case (if $\pi (Q^3) \approx I^1 \times S^1$) the label $n=0$.

\end{enumerate}

\end{enumerate}

 \end {theorem}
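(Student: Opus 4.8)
The plan is to read off all three kinds of labels---the marks $\varepsilon$ and $r$ on each edge and the integer $n$ of the unique saddle family---from gluing matrices written in admissible bases, using the double cover $S^2\to\mathbb{RP}^2$ to import the local data from Kantonistova's theorem \cite{Kant} wherever the involution acts freely, and computing by hand only the corrections coming from the pole and from the equator.

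First I would fix on every boundary torus the two natural cycles furnished by the coordinates $(r,\varphi,p_r,p_\varphi)$: the rotation cycle $\lambda$ (an orbit of the $S^1$-action $\varphi\mapsto\varphi+t$) and a transversal cycle $\mu$ realizing one full $(r,p_r)$-oscillation along \eqref{Eq:Pr_LiouvTorus}. To each atom I would then attach its standard admissible basis (for an atom $A$ the vanishing cycle of the collapsing oscillation; for $V_l,V_k^*,A^*$ the cycles of their canonical models, cf.\ Timonina \cite{Tim}) and recover the labels from the gluing matrices in the usual way (see \cite{BolFom}): from $\bigl(\begin{smallmatrix}\alpha&\beta\\\gamma&\delta\end{smallmatrix}\bigr)$ one reads $r=\alpha/\beta\bmod 1$ (or $\infty$), $\varepsilon=\operatorname{sign}\beta$, and $n$ as the corresponding signed sum over the family.

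The decisive point is the action of the involution \eqref{Eq:CotangInvolution} on these cycles, and here I would split according to whether the projection reaches the equator. If $\pi(Q^3)$ avoids the equator (the disk and cylinder cases, $b<\tfrac{L}{2}$), then for every $k\neq0$ the $(r,p_r)$-oscillation stays away from the pole, so the preimage of a Liouville torus consists of two disjoint tori freely interchanged by $\eta^*$ and the quotient is a diffeomorphism onto one sphere-copy; every noncentral edge then inherits its marks verbatim from $S^2$, giving $\varepsilon=1,\;r=0$ on the $A-V_l$ edges and $r=\infty,\;\varepsilon=+1$ on the saddle--saddle edges. If $\pi(Q^3)$ contains the equator ($b=\tfrac{L}{2}$), a single $\eta^*$-invariant sphere-torus is involved: writing it with the rotation angle $\varphi$ and the oscillation angle $\psi$ and invoking the symmetry of $U_h$ about $r=\tfrac{L}{2}$ (which puts the right turning point exactly at the equator), a short computation shows that $\eta^*$ acts as the free translation $(\varphi,\psi)\mapsto(\varphi+\pi,\psi+\pi)$. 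The quotient torus therefore carries the enlarged lattice $\langle(\pi,\pi),(0,2\pi)\rangle$, in which $\lambda=(2\pi,0)$ becomes the primitive class $2(\pi,\pi)-(0,2\pi)$; rewriting in this basis the gluing matrix of the edge descending from the central atom $A$ (Definition~\ref{Def:AtomACent}), which is the one lying over the equator $r=\tfrac{L}{2}$, is exactly what turns $r=0$ into $r=\tfrac{1}{2}$, the half-integer being the direct imprint of the identification $\varphi\sim\varphi+\pi$. The mark $\varepsilon=-1$ on the central edge comes, as on $S^2$, from the orientation-reversing symmetry $k\mapsto-k$ that exchanges $W_+$ and $W_-$.

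Finally I would compute the integer $n$ of the single family left after deleting all atoms $A$ (Timonina \cite{Tim}). Because the rotation cycle $\lambda$ can be chosen globally consistent across all edges on which the oscillation avoids both pole and equator, every such saddle--saddle $r=\infty$ edge is unsheared and contributes $0$ to the family sum; the entire value of $n$ is therefore produced where the central oscillation at $k=0$ meets a singular end of $[a,b]$. A genuine turning point contributes nothing, whereas the pole end (when $a=0$, via $\varphi\mapsto\varphi+\pi$ at $r=0$) and the equator end (when $b=\tfrac{L}{2}$, via $(\varphi,\psi)\mapsto(\varphi+\pi,\psi+\pi)$) each shear $\lambda$ by one unit and so contribute $-1$. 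This yields $n=0$ in the cylinder case, $n=-1$ for the disk and for the M\"obius strip, and $n=-2$ when $\pi(Q^3)=\mathbb{RP}^2$. I expect the main obstacle to be exactly this shearing count together with the equator basis change: proving that the quotient by $(\varphi,\psi)\mapsto(\varphi+\pi,\psi+\pi)$ deposits the half-integer precisely on the central $A-V_l$ edge while leaving saddle edges at $r=\infty,\varepsilon=+1$, and that the pole and the equator each enter $n$ with multiplicity one even when an interior saddle atom also sits over the equator; the three outcomes must finally agree with the topology of $Q^3$ in Proposition~\ref{A:IsoTop}.
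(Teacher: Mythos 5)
Your plan is essentially the paper's own proof: the authors likewise build explicit admissible bases out of the rotation cycle $\alpha_\varphi$ and the oscillation cycle $\alpha_r$ (Propositions~\ref{A:AdmBas} and \ref{A:AdmBasSaddle}), distinguish central atoms, on which the involution acts on a single torus by $(\varphi,\psi)\mapsto(\varphi+\pi,\psi+\pi)$ and enlarges the lattice by $(\pi,\pi)$, from noncentral ones whose two preimage tori are swapped, and then read $r$, $\varepsilon$ and $n$ off the resulting gluing matrices, with the pole contributing via $\alpha_{r+}=\alpha_{r-}-\alpha_\varphi$ and the equator via the half-integer cycles. Your final label values all agree with Tables~\ref{Tab:EllSaddleMatr} and \ref{Tab:SaddleSaddleCent}; the only loose point is terminological --- what gets sheared through $k=0$ at the pole is the transversal cycle $\alpha_r$ by a multiple of $\lambda=\alpha_\varphi$ (not $\lambda$ itself), and the equator's $-1$ comes from the halved $\mu$-cycles rather than a shear of $\alpha_r$ --- but this does not affect the count.
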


 \begin{remark} In \cite[Volume 2, Theorem 3.11]{BolFom}  in a result of V.\,S.~Matveev about the case of zero potential $V(r) =0$ the label $n$ is incorrect. If $V(r)=0$, then $\pi(Q^3) = \mathbb{RP}^2$ and the label is $n=-2$, not $-1$ (this is also confirmed by the Topalov formula, see Remark~\ref{Rem:Topalov}).   \end{remark}

\section{Construction of admissible bases} \label{S:AdmBasis}

In order to prove Theorems~\ref{Th:main1} and \ref{Th:main2} we explicitly construct admissible bases for all atoms and then calculate the labels by the rules from\cite{BolFom}. We construct admissible atoms separately for elliptic and saddle atoms.

\subsection{Case of elliptic atoms}

The only elliptic atom is the atom $A$. 

\begin{remark} For the atom $A$, we take an admissible basis $(\lambda, \mu)$ according to the following rule from  \cite{BolFom}:

\begin{enumerate}

\item $\lambda$-cycle is contractible;

\item $\mu$-cycle completes the cycle  $\lambda$  to a basis;

\item $\mu$-cycle is oriented by the direction of $\operatorname{sgrad} H$ on the critical circle;

\item $\lambda$-cycle is oriented so that the pair  $(\lambda, \mu)$ is positively oriented on the boundary torus  $\mathbb{T}^2$ of a solid torus.  We assume that a pair of vectors $u, v$ defines a positive orientation of the tangent space  $T_x \mathbb{T}^2$ if \begin{equation} \label{Eq:AtomAOrientCond} \omega \wedge\omega (\operatorname{grad} H, N, u, v)>0, \end{equation}  that is if the four-tuple $(\operatorname{grad} H, N, u, v)$, where $N$ is an outward normal vector to the solid torus, is positively oriented with respect to  $\omega \wedge \omega$. 

\end{enumerate}

\end{remark}

Recall (see \cite{BolFom}) that the $\lambda$-cycle for an atom $A$ is uniquely determined, and the  $\mu$--cycle is given up to an addition of $k \lambda$, where $k \in \mathbb{Z}$.

\begin{remark}  \label{Rem_Syst_Sym} The mapping \begin{equation} \label{Eq:SystSym} (p_r, p_{\varphi}, r, \varphi) \to (p_r, - p_{\varphi}, r, - \varphi),\end{equation} preserves the symplectic structure  $\omega = dp_r \wedge dr + d p_{\varphi} \wedge d \varphi$ on $T^* S^2$ with $H \to H, K \to -K$. We will construct admissible bases for $K<0$ from the admissible bases for $K>0$ using the mapping~\eqref{Eq:SystSym}.  In particular, for an atom  $A$ in the coordinates $(p_{r'}, p_{\varphi'}, r', \varphi') = (p_r, - p_{\varphi}, r, - \varphi)$ an  admissible bases  $(\lambda', \mu')$ is given by the same formulas as the basis $(\lambda, \mu)$. 
 \end{remark}

Further note that for $K \not =0$  the Liouville torus defined by \eqref{Eq:Pr_LiouvTorus}, (before factorization by the involution~\eqref{Eq:CotangInvolution}) is the product of two cycles:

\begin{enumerate}

\item a cycle $\alpha_{\varphi}$ of the form\begin{equation} \label{Eq:PhiCycle}
\Big\{p_{r}=const, \quad p_{\varphi}=const,\quad r=const, \quad \varphi \in\mathbb{R}/2\pi\mathbb{Z}\Big\},
\end{equation} where we assume that $\dot{\varphi} >0$;

\item  a cycle $\alpha_{r}$  of the form \begin{equation} \label{Eq:RCycle}
\Big\{p_{r}=\pm\frac{1}{f(r)}\sqrt{U_{h}(r)-k^{2}}, p_{\varphi}=const,r\in\left[r_{1},r_{2}\right]\subset\left[0,L\right], \varphi =const\Big\},
\end{equation} which, for definiteness, is clockwise (that is $\dot{r}>0$ for $p_r >0$).

\end{enumerate}

Each atom $A$ corresponds to a maximum of the function $U_h(r)$  for $K>0$ and to a minimum of the function $U_h(r)$ for $K<0$.  Let this value be reached at a point $r_{\mathrm{ext}}$.  For $\mathbb{RP}^2$ there will be two cases:

\begin{enumerate}

\item if $r_{\mathrm{ext}} \not = \frac{L}{2}$, then the involution~\eqref{Eq:CotangInvolution}  identifies two Liouville tori in a neighbourhood of the singular leaf of the atom $A$ (in Fig~\ref{SubFig:AnotCenter} the corresponding cycles $\alpha_r$ are shown);

\item if $r_{\mathrm{ext}}  = \frac{L}{2}$, then the involution~\eqref{Eq:CotangInvolution} takes a Liouville torus into itself in a neighbourhood of the singular leaf of the atom  $A$ (see Fig.~\ref{SubFig:ACenter})

\end{enumerate}

 \begin{figure}[h]
 \begin{subfigure}{0.5\textwidth}
  \centering
 \includegraphics{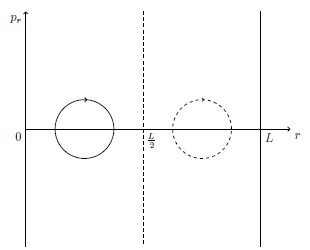}
 \caption{Noncentral atom}
\label{SubFig:AnotCenter}
\end{subfigure}
  \begin{subfigure}{0.5\textwidth}
   \centering
   \includegraphics{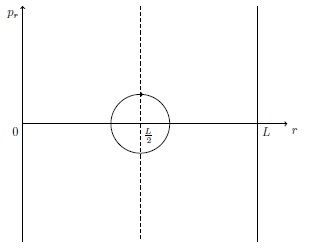}
 \caption{Central atom}
\label{SubFig:ACenter}
\end{subfigure}
 \caption{Section of an atom $A$}
\label{ris:image1}
\end{figure}

\begin{definition}\label{Def:AtomACent}  An atom $A$ corresponding to $r_{\mathrm{ext}} \not = \frac{L}{2} \displaystyle$ will be called  \textit{noncentral}, and an atom $A$ corresponding to $r_{\mathrm{ext}} = \frac{L}{2} \displaystyle$ will be called \textit{central}. \end{definition}

\begin{proposition} \label{A:AdmBas} For the system  on $\mathbb{RP}^2$  under consideration we can take the bases given in the Table~\ref{table:AdmBasisA}  as admissible bases $(\lambda, \mu)$ in a neighbourhood of an atom $A$ depending on the values of the integral $K$ and the type of the atom $A$.

\begin{table}[h]

\centering

{
\renewcommand{\arraystretch}{2}%

\begin{tabular} {c | c| c }
& Noncentral $A$ & Central $A$ \\ \hline
$K>0$ &  $\lambda = \alpha_r$, $\mu = \alpha_{\varphi}$ & $\lambda = \alpha_r$, $\mu = \frac{\alpha_r + \alpha_{\varphi}}{2} \displaystyle $  \\ \hline
$K<0$ &  $\lambda = \alpha_r$, $\mu = - \alpha_{\varphi}$ & $\lambda = \alpha_r$, $\mu = \frac{\alpha_r -\alpha_{\varphi}}{2}\displaystyle  $ \\
\end{tabular}

\caption{Admissible bases for an atom $A$}
\label{table:AdmBasisA}

}

\end{table}

\end{proposition}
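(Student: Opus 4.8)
The plan is to verify that each entry of Table~\ref{table:AdmBasisA} satisfies the four defining rules for an admissible basis of an atom $A$: that $\lambda$ is the contractible cycle, that $\mu$ completes it to a basis, that $\mu$ is cooriented with $\operatorname{sgrad} H$ on the critical circle, and that $(\lambda,\mu)$ is positively oriented on the boundary torus in the sense of~\eqref{Eq:AtomAOrientCond} (recall $\mu$ is anyway only determined up to adding $k\lambda$, so it suffices to exhibit one valid representative). First I would reduce everything to $K>0$: by Remark~\ref{Rem_Syst_Sym} the symplectomorphism~\eqref{Eq:SystSym} sends $K\to -K$, fixes $H$, fixes $\alpha_r$ (since $r'=r,\ p_{r'}=p_r$) and reverses $\alpha_\varphi$ (since $\varphi'=-\varphi$); applying it to a $K>0$ admissible basis therefore produces exactly the $K<0$ rows of the table from the $K>0$ rows. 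The cycle $\lambda=\alpha_r$ is common to all four entries, and I would justify this once: an atom $A$ is a neighbourhood of a critical circle on which the oscillation interval $[r_1,r_2]$ of~\eqref{Eq:RCycle} collapses to $r_{\mathrm{ext}}$, so $\alpha_r$ bounds a meridional disk of the solid torus and is contractible, both before and after factorization.

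For a noncentral atom ($r_{\mathrm{ext}}\neq L/2$) the involution~\eqref{Eq:CotangInvolution} glues two distinct Liouville tori near the singular leaf (Fig.~\ref{SubFig:AnotCenter}): the restriction of the quotient projection to a neighbourhood of one of the two swapped singular leaves is a symplectic diffeomorphism onto the quotient neighbourhood. Hence the atom $A$ downstairs is Liouville-equivalent to the corresponding atom $A$ on the sphere, and the admissible basis is inherited unchanged from the computation of~\cite{Kant}: $\lambda=\alpha_r$, $\mu=\alpha_\varphi$. The coorientation rule is immediate, since on the critical circle $p_r=0$ gives $\dot r=\partial H/\partial p_r=0$ and $\dot\varphi=\partial H/\partial p_\varphi=k/f^2>0$ for $K>0$, so $\operatorname{sgrad}H$ points along $+\alpha_\varphi$.

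The central atom ($r_{\mathrm{ext}}=L/2$) is the crux. Here~\eqref{Eq:CotangInvolution} maps the single invariant torus to itself (Fig.~\ref{SubFig:ACenter}), and the content is to compute this action in the basis $(\alpha_\varphi,\alpha_r)$. I would parametrize the torus by $(\varphi,t)$ with $t\in\mathbb{R}/\mathbb{Z}$ running once along $\alpha_r$, so that $t=0$ is $(r_1,p_r=0)$, $t=\tfrac14$ is $(L/2,p_r>0)$, $t=\tfrac12$ is $(r_2,p_r=0)$ and $t=\tfrac34$ is $(L/2,p_r<0)$. Since $U_h$ is symmetric about $L/2$, the interval $[r_1,r_2]$ is symmetric about $L/2$, so $\eta^*$ acts by $r\mapsto L-r,\ p_r\mapsto -p_r$, interchanging the turning points and the two $p_r$-branches; tracing this gives $t\mapsto t+\tfrac12$, while $\varphi\mapsto\varphi+\pi$. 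Thus $\eta^*(\varphi,t)=(\varphi+\pi,\,t+\tfrac12)$, a fixed-point-free simultaneous half-shift. The quotient torus therefore carries the new integral cycle $(\pi,\tfrac12)=\tfrac12(\alpha_r+\alpha_\varphi)$, and $\{\alpha_r,\tfrac12(\alpha_r+\alpha_\varphi)\}$ is a basis of its first homology; taking $\lambda=\alpha_r$ (still contractible) and $\mu=\tfrac12(\alpha_r+\alpha_\varphi)$ recovers the table. The coorientation rule holds because $\alpha_r$ shrinks on approach to the singular fibre, so $\mu$ limits to one loop $\tfrac12\alpha_\varphi$ of the quotient critical circle $\mathbb{R}/\pi\mathbb{Z}$ in the $+\varphi$ direction, again matching $\operatorname{sgrad}H$ for $K>0$.

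It remains to fix the orientation of $\lambda$ by the positivity condition~\eqref{Eq:AtomAOrientCond}: with $N$ the outward normal to the solid torus one checks that $(\operatorname{grad}H,N,\alpha_r,\mu)$ is positively oriented for $\omega\wedge\omega$. This is a direct sign computation of a $4\times4$ determinant in the coordinates $(p_r,p_\varphi,r,\varphi)$, using the explicit cycles~\eqref{Eq:PhiCycle}--\eqref{Eq:RCycle}, and it confirms that the stated orientation of $\alpha_r$ ($\dot r>0$ for $p_r>0$) is the correct $\lambda$-orientation in each case; the $K<0$ rows then follow from~\eqref{Eq:SystSym} as above. The main obstacle is the central case: correctly establishing that $\eta^*$ acts as a half-shift in \emph{both} factors — and hence that $\tfrac12(\alpha_r+\alpha_\varphi)$ is a genuine cycle of the quotient torus rather than $\tfrac12\alpha_\varphi$ or $\tfrac12\alpha_r$ separately — together with tracking the orientations consistently through the factorization, where sign errors are easy to make.
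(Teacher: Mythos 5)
Your proposal is correct and follows essentially the same route as the paper's proof: reduction to $K>0$ via the symmetry~\eqref{Eq:SystSym}, the explicit determinant check of~\eqref{Eq:AtomAOrientCond} for noncentral atoms, and the identification of the involution on the central torus as a simultaneous half-shift in both angle coordinates, which yields the new cycle $\tfrac12(\alpha_r+\alpha_\varphi)$ after adjoining $(\pi,\pi)$ to the lattice. Your tracing of the turning points to justify the half-shift in the $\alpha_r$ direction is in fact slightly more explicit than the paper's, which simply asserts this form of the action.
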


\begin{proof}[Proof of Proposition~\ref{A:AdmBas}] In accordance with Remark ~\ref{Rem_Syst_Sym}   it suffices to prove the statement for $K>0$. First, consider noncentral atoms (see Fig.~\ref{SubFig:AnotCenter}). Obviously, the cycle $\lambda$ is contractible , and that  $(\lambda, \mu)$ is a basis. Further, the canonical symplectic structure on the cotangent bundle has the form $\omega = dp_r \wedge dr + dp_{\varphi} \wedge d\varphi$. Therefore, \begin{equation} \label{Eq:HamVectH} \operatorname{sgrad} H = \omega^{-1} dH = \left( - \frac{\partial H}{\partial r} , 0, p_r, \frac{p_{\varphi}}{f^2} \right). \end{equation} At singular points of rank $1$ the vector field $\operatorname{sgrad} H$  is proportional to  $\operatorname{sgrad} K = \left(0, 0, 0, 1 \right)$. Therefore, the orientation of the cycle  $\mu$  depends only on the sign of $p_{\varphi} = k$ and is chosen correctly.

It remains to verify that the orientation of $\lambda$ is chosen correctly, that is \eqref{Eq:AtomAOrientCond} holds. Denote by $v_{\lambda}$ and $v_{\mu}$ the tangent vectors to the cycles $\lambda$ and $\mu$. We assume that $p_r>0$. In this case, the four vectors are as follows \begin{gather*}  \operatorname{grad} H = \left( p_r, \frac{p_{\varphi}}{f^2}, \frac{\partial H}{\partial r}, 0 \right), \qquad  N = \left( \alpha, -1, -\alpha \beta, 0 \right) \\ v_{\lambda} = \left( \beta, 0, 1, 0 \right), \qquad v_{\mu} = \left( 0, 0, 0, 1\right),  \end{gather*} where \[ \alpha = \frac{k}{f^2} \frac{p_r}{p_r^2 + \left( \frac{\partial H}{\partial r} \right)^2}, \qquad \beta = - \frac{1}{p_r}  \frac{\partial H}{\partial r}.\] Note that the vector  $N$ is found from the conditions \[ (N, \operatorname{grad} H) = (N, v_{\lambda}) = (N, v_{\mu}) =0, \qquad N(p_{\varphi})<0.\]

The volume form $\omega \wedge \omega = - 2 d p_r \wedge dp_{\varphi} \wedge dr \wedge d \varphi$, so the four-tuple is positively oriented, since the determinant of the corresponding matrix is negative: \[\left| \begin{matrix} p_r & \frac{p_{\varphi}}{f^2} & \frac{\partial H}{\partial r} & 0 \\  \alpha & -1 & -\alpha \beta & 0 \\ \beta & 0 & 1 & 0 \\ 0 & 0 & 0 & 1 \end{matrix} \right| = - \frac{1}{p_r} \left( p_r^2 + \left(  \frac{\partial H}{\partial r}\right)^2 + \frac{k^2}{f^4} \right)<0,    \] because we took $p_r>0$. The condition~\eqref{Eq:AtomAOrientCond} is satisfied; therefore,  $(\lambda, \mu)$  is an admissible basis.

Now consider the central atoms (see Fig.~\ref{SubFig:ACenter}). It suffices to prove that  $(\lambda, \mu)$ is a basis on the Liouville torus (after factorization by the involution~\eqref{Eq:CotangInvolution}). The remaining statements (that $\lambda$ is contractible and that the orientations of $\lambda, \mu$  are chosen correctly) are proved in the same way as in the case  $r_{\mathrm{ext}} \not  = \frac{L}{2}$. Prior the factorization by the involution~\eqref{Eq:CotangInvolution}  we can take the same basis $(\lambda', \mu')$ as in the case $r_{\mathrm{ext}} \not  = \frac{L}{2}$. Introduce a parameter $\psi \in \mathbb{R}/ 2\pi \mathbb{Z}$ on a closed curve in Fig.~\ref{SubFig:ACenter}. Moreover, we can assume that the involution~\eqref{Eq:CotangInvolution} acts as $ (\varphi, \psi) \to (\varphi + \pi, \psi + \pi)$. In this case, before involution, the torus $\mathbb{T}^2$ was a quotient of $\mathbb{R}^2$ by the subgroup generated by the vectors $(2 \pi, 0)$ and $(0, 2\pi)$. After involution $(\pi, \pi)$ must be added to the generators of the subgroup. Hence $\lambda = \lambda'$ and $\mu = \frac{\lambda' + \mu'}{2}$ will really be the new basis in $\pi_1 (\mathbb{T}^2/ \mathbb{Z}^2)$.  Q.E.D.

Proposition~\ref{A:AdmBas} is proved.\end{proof}

\subsection{Case of saddle atoms}

There are two types of saddle atoms --- with and without stars.

\begin{remark} For a saddle atom without stars, we take admissible bases  $(\lambda, \mu_i)$ according to the following rule from \cite{BolFom}:

\begin{enumerate}

\item $\lambda$-cycle is a fiber the Seifert fibration (that is the cycle that is obtained from the critical circle by a continuous extension to neighboring tori);

\item $\mu_i$-cycles complement the cycle  $\lambda$  to a basis, and there must exist a global section of the $3$-atom passing simultaneously through all $\mu_i$.

\item $\lambda$-cycle is oriented by the direction of $\operatorname{sgrad} H$ on the critical circle;

\item $\mu_i$--cycle is oriented so that the pair $(\lambda, \mu_i)$  is positively oriented on the boundary torus  $\mathbb{T}^2$   (orientation on the torus  $\mathbb{T}^2$ is given by~\eqref{Eq:AtomAOrientCond}, as in the case of the atom $A$).

\end{enumerate}

\end{remark}

\begin{remark} \label{Rem:StarAtom}  For atoms with stars, in accordance with  \cite{BolFom}, we construct admissible bases for a double of an atom. In this case, the double of an atom is the corresponding atom for the sphere $S^2$. If the corresponding Liouville torus transforms into itself under involution~\eqref{Eq:CotangInvolution},  then the cycle $\hat{\mu}_i$ for the sphere  $S^2$  must be replaced by $\mu_i = \frac{\lambda + \hat{\mu}_i}{2}$. Otherwise, this cycle can be left unchanged: $\mu_i = \hat{\mu}_i$. \end{remark}

Recall (see  \cite{BolFom}),  that the $\lambda$-cycle for saddle atoms is uniquely defined, and the $\mu_i$-cycles are defined up to an addition of $k_i \lambda$, where $k_i \in \mathbb{Z}$ and $\sum_i k_i =0$.

\begin{definition}  We say that a saddle atom that goes into itself under the involution~\eqref{Eq:CotangInvolution} is \textit{central}. The remaining saddle atoms will be called  \textit{noncentral}. \end{definition}

\begin{proposition} \label{A:AdmBasSaddle} For the system  on $\mathbb{RP}^2$ under consideration  in a neighbourhood of the saddle atom, the $\lambda$ cycle of an admissible basis is  $\alpha_{\varphi}$ for $K>0$ and $-\alpha_{\varphi}$ for $K<0$.

\begin{enumerate}

\item Let a saddle atom be noncentral (an example of a cross section of such an atom before factorization by the involution~\eqref{Eq:CotangInvolution} is schematically shown in Fig.~\ref{Fig:SaddleNoStarNoInv}).  For an ``outer'', according to Fig.~\ref{Fig:SaddleNoStarNoInv},  Liouville torus $\mu_{\mathrm{out}} = -\alpha_r$, for``inner'' tori $\mu_{\mathrm{in}} = \alpha_r$.

\item Let the saddle atom be central and have no stars (an example of a cross section of such an atom before factorization by the involution~\eqref{Eq:CotangInvolution} is schematically shown in Fig.~\ref{Fig:SaddleNoStarInv}). In accordance with Fig.~\ref{Fig:SaddleNoStarInv}  there will be $3$ types of $\mu$-cycles corresponding to different Liouville boundary tori --- ``outer''  $\mu_{\mathrm{out}}$, ``inner noncentral'' $\mu_{\mathrm{in}}$ and ``inner central''  $\mu_{\mathrm{cent}}$.They can be taken as follows:  \[\mu_{\mathrm{out}} = \begin{cases} \frac{\alpha_{\varphi} -\alpha_r}{2}, \quad & K>0, \\ \frac{-\alpha_{\varphi} -\alpha_r}{2}, \quad & K<0,  \end{cases} \qquad \mu_{\mathrm{in}} = \alpha_r, \qquad \mu_{\mathrm{cent}} =  \begin{cases} \frac{\alpha_{r} -\alpha_{\varphi}}{2}, \quad & K>0, \\ \frac{\alpha_r+\alpha_{\varphi} }{2}, \quad & K<0.  \end{cases}  \]

\item 
Let a saddle atom be central and have stars  (an example of a cross section of such an atom before factorization by the involution~\eqref{Eq:CotangInvolution} is schematically shown in Fig.~\ref{Fig:SaddleStar}).  For an ``inner'', according to Fig. ~\ref{Fig:SaddleStar}, Liouville tori $\mu_{\mathrm{in}} = -\alpha_r$,  and for an ``outer'' torus $\mu_{\mathrm{out}} =  \frac{\alpha_{\varphi} -\alpha_{r}}{2}$ for $K>0$ and $\frac{-\alpha_r-\alpha_{\varphi} }{2}$ for $K<0$.

\end{enumerate}

\end{proposition}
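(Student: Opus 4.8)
The plan is to mirror the proof of Proposition~\ref{A:AdmBas}: I first reduce everything to the case $K>0$ by means of the symmetry~\eqref{Eq:SystSym} (Remark~\ref{Rem_Syst_Sym}), and then treat the $\lambda$-cycle, the noncentral atoms, and the two central families in turn. I begin by fixing the $\lambda$-cycle, which for a saddle atom is the fiber of the Seifert fibration, i.e. the continuous extension of the critical circle to the neighboring tori. On a critical circle $\operatorname{sgrad} H$ is proportional to $\operatorname{sgrad} K = (0,0,0,1)$, exactly as in the elliptic case, so the critical circle runs in the $\varphi$-direction and $\lambda = \alpha_{\varphi}$, oriented by $\operatorname{sgrad} H$. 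The sign of $p_{\varphi}=k$ then gives $\lambda = \alpha_{\varphi}$ for $K>0$, and applying~\eqref{Eq:SystSym} yields $\lambda = -\alpha_{\varphi}$ for $K<0$; this disposes of the $\lambda$-cycle uniformly for all saddle atoms.

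For a noncentral atom the involution~\eqref{Eq:CotangInvolution} carries the atom to a \emph{different} copy of itself, hence acts freely on a neighborhood of the singular leaf, and the quotient $3$-atom is canonically isomorphic to the corresponding atom for $S^2$. I may therefore take the admissible basis to be the sphere one and only read off the $\mu$-cycles: they must complete $\lambda=\alpha_{\varphi}$ to a basis, admit a simultaneous global section of the $3$-atom, and satisfy the positivity condition~\eqref{Eq:AtomAOrientCond}. The requirement of a single global section fixes the \emph{relative} signs of the $\alpha_r$-cycles on the ``outer'' and ``inner'' boundary tori, and a short determinant computation of $\omega\wedge\omega(\operatorname{grad} H, N, v_{\lambda}, v_{\mu})$, identical in spirit to the one in Proposition~\ref{A:AdmBas} but now with $\lambda=\alpha_{\varphi}$, pins down the overall orientation, giving $\mu_{\mathrm{out}} = -\alpha_r$ and $\mu_{\mathrm{in}} = \alpha_r$.

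For a central atom without stars I repeat the factorization argument used for the central atom $A$. Before factorization I take the sphere basis $(\lambda', \mu_i')$ with $\mu_i' = \pm\alpha_r$ as above. The involution fixes the central Liouville torus and the ``outer'' torus, acting on each as $(\varphi, \psi)\to(\varphi+\pi, \psi+\pi)$ in a suitable parameter $\psi$; adjoining the generator $(\pi,\pi)$ to the lattice $\mathbb{Z}^2$ replaces the relevant cycle by the corresponding half-sum, producing $\mu_{\mathrm{cent}} = \frac{\alpha_r-\alpha_{\varphi}}{2}$ and $\mu_{\mathrm{out}} = \frac{\alpha_{\varphi}-\alpha_r}{2}$, while the ``inner noncentral'' tori are not self-identified and keep $\mu_{\mathrm{in}} = \alpha_r$. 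I then verify that these three cycles really form admissible $\mu$-cycles: that each completes $\lambda$ to a basis of the factorized torus, that the constraint $\sum_i k_i = 0$ holds, and that a global section survives the quotient. For a central atom with stars I instead invoke Remark~\ref{Rem:StarAtom}: the double is the sphere atom, I construct its basis as above, and on each Liouville torus mapped to itself I replace $\hat{\mu}_i$ by $\frac{\lambda+\hat{\mu}_i}{2}$, which yields $\mu_{\mathrm{in}} = -\alpha_r$ and $\mu_{\mathrm{out}} = \frac{\alpha_{\varphi}-\alpha_r}{2}$ for $K>0$, with the $K<0$ values obtained from~\eqref{Eq:SystSym}.

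The main obstacle I expect is the bookkeeping for the central atoms: correctly identifying which boundary tori of the $3$-atom are self-identified by~\eqref{Eq:CotangInvolution}, describing the induced action as the shift $(\pi,\pi)$ on the corresponding lattice, and checking that the resulting half-integer combinations genuinely constitute an admissible basis — in particular that after factorization a single global section of the Seifert fibration still passes through all the $\mu_i$ simultaneously and that $\sum_i k_i = 0$ remains satisfied. By contrast, the orientation signs and the determinant verification are routine repetitions of the elliptic computation in Proposition~\ref{A:AdmBas}; the delicate point is the behavior of the global section under the quotient.
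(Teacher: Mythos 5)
Your proposal is correct and follows essentially the same route as the paper: reduction to $K>0$ via the symmetry~\eqref{Eq:SystSym}, taking $\lambda=\alpha_{\varphi}$ as the cycle of the critical circle, reusing the orientation computation from Proposition~\ref{A:AdmBas} for the $\mu$-cycles of noncentral atoms, the lattice/half-sum factorization argument together with the condition $\sum_i k_i=0$ for the central atom without stars, and Remark~\ref{Rem:StarAtom} for atoms with stars. If anything, your write-up is more detailed than the paper's, which mostly defers to the elliptic case and only sketches the ``outer torus'' example.
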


 \begin{figure}[h]
 \begin{subfigure}{0,5\textwidth}
  \centering
 \includegraphics{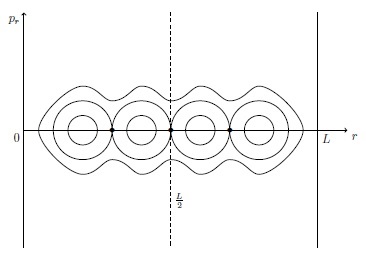}
\caption{Atom with a star}
\label{Fig:SaddleStar}
\end{subfigure}
~
  \begin{subfigure}{0,5\textwidth}
   \centering
   \includegraphics{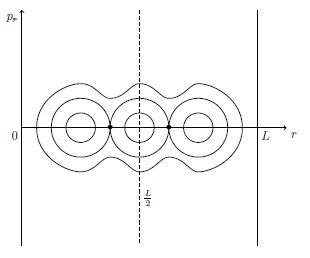}
\caption{Central atom without *}
\label{Fig:SaddleNoStarInv}
\end{subfigure}
\\
  \begin{subfigure}{\textwidth}
   \centering
   \includegraphics{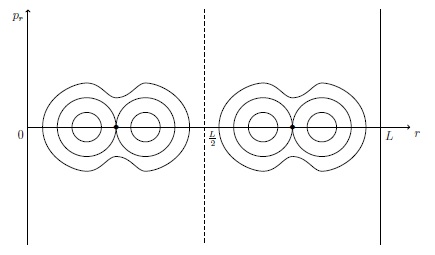}
\caption{Noncentral atom}
\label{Fig:SaddleNoStarNoInv}
\end{subfigure}
 \caption{Section of a saddle atom}
\label{ris:image1}
\end{figure}

\begin{proof} [Proof of Proposition~\ref{A:AdmBasSaddle}] Proposition~\ref{A:AdmBasSaddle} is proved similarly to Proposition~\ref{A:AdmBas}. Moreover, most of the calculations can be omitted, since in Proposition~\ref{A:AdmBas}  for each Liouville torus basis cycles  and their orientation were already found. Also, according to Remark~\ref{Rem_Syst_Sym}  it is enough to construct a basis for  $K>0$: in the coordinates $(p_{r'}, p_{\varphi'}, r', \varphi') = (p_r, - p_{\varphi}, r, - \varphi)$ for saddle atoms, as well as for elliptic atoms, the admissible basis $(\lambda', \mu')$ is given by the same formulas as the basis $(\lambda, \mu)$.

As an example, consider an “outer” torus for the case corresponding to Fig.~\ref{Fig:SaddleNoStarNoInv} for $K>0$. For a saddle atom $\lambda = \alpha_{\varphi}$ for $K>0$ since it is a cycle corresponding to a critical circle. For the corresponding atom  $A$  the admissible basis is $\lambda_A = \alpha_r, \mu_A = \alpha_{\varphi}$.  The orientation of the ``outer'' Liouville tori will be like that for the corresponding $A$ atoms, while the ``inner'' tori will have the opposite orientation. Therefore, the remaining cycle $\mu_{\mathrm{out}} = -\alpha_r$.

We note specific features for the choice of bases in the saddle case. For the case corresponding to Fig.~\ref{Fig:SaddleNoStarInv}, a section must pass through the cycles $\mu_i$. Therefore, we take opposite signs for $\alpha_{\varphi}$ in $\mu_{\mathrm{out}}$ and $\mu_{\mathrm{cent}}$ so that they give zero in the sum. For the case corresponding to Fig.  ~\ref{Fig:SaddleStar}, the $\mu$-cycles are in accordance with Remark ~\ref{Rem:StarAtom}.

Proposition~\ref{A:AdmBasSaddle} is proved. \end{proof}

\section{Proof of Theorem~\ref{Th:main1}} \label{S:ProofTh1}

\begin{proof}[Proof of Theorem~\ref{Th:main1}] We explicitly compute all gluing matrices in the admissible bases described in Section~\ref{S:AdmBasis}.  In all cases, the labels $r$ and $\varepsilon$ are calculated using the gluing matrices as described in \cite{BolFom}.

Denote by  $(\lambda_{+}, \mu_{+})$  and  $(\lambda_{-}, \mu_{-})$  the admissible bases for $K>0$ and $K<0$ respectively. In all cases, the cycle $\alpha_{\varphi}$ is correctly defined for all  $K$.  For the cycles $\alpha_r$ this is not the case because the coordinates $(r, \varphi)$ have singularities at the poles. Therefore, if the level  $K=0$ contains a pole, then the cycles $\alpha_{r\pm}$ for $K>0$ and $K<0$ can go into different cycles after passing to the level  $K=0$.

\begin{enumerate}

\item First, consider the case \ref{Item:RP2S1S3Case}. Since for $K=0$  the Liouville torus does not contain poles, $(p_r, p_{\varphi}, r, \varphi)$ will be global coordinates on $Q^3$. Therefore, the formulas for admissible bases from Proposition ~\ref{A:AdmBas} will be satisfied for all  $K$. The gluing matrix for noncentral atoms has the form \[ \left( \begin{matrix} \lambda_{+} \\ \mu_{+} \end{matrix} \right) = \left( \begin{matrix} 1 & 0 \\ 0 & -1 \end{matrix} \right) \left(\begin{matrix} \lambda_{-} \\ \mu_{-} \end{matrix} \right), \]   and for central atoms has the form \[ \left( \begin{matrix} \lambda_{+} \\ \mu_{+} \end{matrix} \right) = \left( \begin{matrix} 1 & 0 \\ 1 & -1 \end{matrix} \right) \left(\begin{matrix} \lambda_{-} \\ \mu_{-} \end{matrix} \right). \]

\item Now consider the case ~\ref{Item:RP2SphereCase}.  It is easy to show that the following relation holds \begin{equation} \label{Eq:BasisThroughPole} \alpha_{r+} = \alpha_{r-} - \alpha_{\varphi}. \end{equation} Fig.~\ref{Fig:LambdaChange} schematically shows how the cycle $\alpha_{r+}$ (continuously) changes with a variation of $K$. From~\eqref{Eq:BasisThroughPole} and Proposition~\ref{A:AdmBas} it follows that in this case the gluing matrix has the form  \begin{equation} \label{Eq:GlueMatrixOnePole} \left( \begin{matrix} \lambda_{+} \\ \mu_{+} \end{matrix} \right) = \left( \begin{matrix} 1 & 1 \\ 0 & -1 \end{matrix} \right) \left(\begin{matrix} \lambda_{-} \\ \mu_{-} \end{matrix} \right). \end{equation}

\begin{figure}
  \centering
\includegraphics{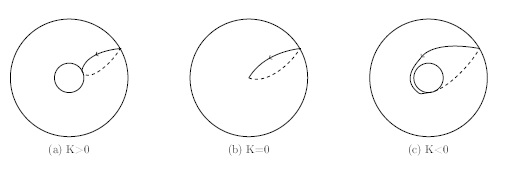}
  \caption{Bifurcation of the $\lambda$ in case~\ref{Item:RP2SphereCase}}
  \label{Fig:LambdaChange}
\end{figure}

Let us demonstrate this with a simple example: consider a  natural  system on the plane $\mathbb{R}^2(x, y)$ with Hamiltonian $H = \frac{p_x^2 + p_y^2 }{2} + \frac{x^2 +y^2}{2}$ and first integral $K = x p_y - y p_x$.  It is easy to see that this system has one singular point of rank $0$ of center-center type. In the coordinates \[ u_1 = \frac{p_x + y}{\sqrt{2}}, \quad v_1 = \frac{p_y -x}{\sqrt{2}}, \quad u_2 = \frac{p_x - y}{\sqrt{2}}, \quad v_2 = \frac{p_y +x}{\sqrt{2}},\] the form is  $\omega = du_1 \wedge dv_1 + du_2\wedge dv_2$, and the integrals $F_{1} = \frac{H-K}{2} = \frac{u_1^2 +v^2_1}{2}$ and $F_{2} = \frac{H+K}{2} = \frac{u_2^2 +v^2_2}{2}$ have a canonical form, as in the Eliasson theorem (see \cite{BolFom}). Denote by $\alpha_{1}$ and $\alpha_{2}$ the cycles corresponding to the trajectories of the Hamiltonian vector fields $\operatorname{sgrad} F_1$ and $\operatorname{sgrad} F_2$ respectively. Similarly to the proof of Proposition~\ref{A:AdmBas} it is easy to show that we  can take $\lambda_{+} = \alpha_{1}, \mu_{+}' = \alpha_2$  as admissible bases (in a neighbourhood of points $F_1 =0$) and $\lambda_{-} = \alpha_{2}, \mu_{-}' = \alpha_1$ (in a neighbourhood of points $F_2 =0$). For systems on $\mathbb{RP}^2$ the cycles $\mu_{\pm}$ correspond to the flow of$\operatorname{sgrad} K$. Therefore we have to take $\mu_{+} = \alpha_2 - \alpha_1$ and $\mu_{-} = \alpha_1 - \alpha_2$. Thus, we obtain the required relation~\eqref{Eq:GlueMatrixOnePole}.

\item Finally, consider the case~\ref{Item:RP2LensCase}.  Similarly to the case ~\ref{Item:RP2SphereCase}  (see also \cite[Volume 2, Section  3.4]{BolFom}) we can show that  \begin{equation} \label{Eq:BasisThroughPole2} \alpha_{r+} = \alpha_{r-}  - 2 \alpha_{\varphi}. \end{equation} From~\eqref{Eq:BasisThroughPole2} and Proposition~\ref{A:AdmBas}  it follows that in this case the gluing matrix has the form \[ \left( \begin{matrix} \lambda_{+} \\ \mu_{+} \end{matrix} \right) = \left( \begin{matrix} 3 & 4 \\ 1 & 1 \end{matrix} \right) \left(\begin{matrix} \lambda_{-} \\ \mu_{-} \end{matrix} \right). \]

\end{enumerate}

Theorem~\ref{Th:main1} is completely proved. \end{proof}

\section{Proof of Theorem~\ref{Th:main2}}

\begin{proof}[Proof of Theorem~\ref{Th:main2}]  Theorem~\ref{Th:main2} is proved similarly to Theorem~\ref{Th:main1}. We only list all gluing matrices between bifurcations. By $\lambda^A_{\pm}, \mu^A_{\pm}$ we will denote admissible bases for an atom   $A$, and by $\lambda^V_{\pm}, \mu^V_{\pm}$ we will denote admissible bases for saddle atoms (both with stars and without). The sign $\pm$ is positive for the atom corresponding to a larger value of  $K$  (and negative for a smaller value of  $K$).

Table~\ref{Tab:EllSaddleMatr} shows the gluing matrix $C$ for edges of the form  $A-V$ and $V-A$ (that is, between elliptic and saddle atoms). Here we assume that $\left( \begin{matrix} \lambda^A_{+} \\ \mu^A_{+} \end{matrix} \right) = C \left(\begin{matrix} \lambda^V_{-} \\ \mu^V_{-} \end{matrix} \right) \displaystyle$ for $K>0$ and $\left( \begin{matrix} \lambda^V_{+} \\ \mu^V_{+} \end{matrix} \right) = C \left(\begin{matrix} \lambda^A_{-} \\ \mu^A_{-} \end{matrix} \right) \displaystyle$ for $K<0$.

\begin{table}[h!]
\centering

{
\renewcommand{\arraystretch}{1.5}%

\begin{tabular} {c | c| c }
& Noncentral $A$   & Central $A$ \\ \hline
$K>0$ &  $\left( \begin{matrix} 0 & 1 \\ 1 & 0 \end{matrix} \right)$ &$\left( \begin{matrix} 1 & 2 \\ 1 & 1 \end{matrix} \right)$  \\ \hline
$K<0$ &  $\left( \begin{matrix}0 & 1 \\ 1 & 0 \end{matrix} \right)$, & $\left( \begin{matrix} -1 & 2 \\ 1 & -1 \end{matrix} \right)$  \\
\end{tabular}

}

\caption{Gluing matrices between elliptic and saddle atoms}
\label{Tab:EllSaddleMatr}

\end{table}

For noncentral edges of the form $V-V$  (that is between saddle atoms) \[ \left( \begin{matrix} \lambda^V_{+} \\ \mu^V_{+} \end{matrix} \right) = \left( \begin{matrix} 1 & 0 \\ 0 & -1 \end{matrix} \right) \left(\begin{matrix} \lambda^V_{-} \\ \mu^V_{-} \end{matrix} \right). \]

The gluing matrix $C$ on the central edge of the form $V-V$  depends on the type of projection  $\pi(Q^3)$ on $\mathbb{RP}^2$ is shown in Table~\ref{Tab:SaddleSaddleCent}. We assume that $\left( \begin{matrix} \lambda^V_{+} \\ \mu^V_{+} \end{matrix} \right) = C \left(\begin{matrix} \lambda^V_{-} \\ \mu^V_{-} \end{matrix} \right) \displaystyle$. 
It suffices to take an advantage of the fact that we know the expression of admissible bases in terms of $\alpha_{r}$ and $\alpha_{\varphi}$ (see Proposition~\ref{A:AdmBasSaddle}) and how these cycles change with a change of the sign of $K$ (see the proof of Theorem~\ref{Th:main1}). Namely, in all cases, the cycle $\alpha_{\varphi}$ does not change, $\alpha_{r+} = \alpha_{r-}$ if $\pi(Q^3) \approx I^1 \times S^1$ or $\mathbb{M}^2$,  \eqref{Eq:BasisThroughPole} holds if $\pi(Q^3) \approx D^2$, and \eqref{Eq:BasisThroughPole2} holds if $\pi(Q^3) = \mathbb{RP}^2$   .

\begin{table}[h!]
\centering

{
\renewcommand{\arraystretch}{1.5}%

\begin{tabular} {c | c| c | c |c }
$\pi(Q^3)$ & $\mathbb{RP}^2$ & $D^2$ & $\mathbb{M}^2$ & $I^1 \times S^1$    \\ \hline
$C$ &  $\left( \begin{matrix}-1 & 0 \\-2 & 1 \end{matrix} \right)$ &  $\left( \begin{matrix}-1 & 0 \\-1 & 1 \end{matrix} \right)$ &  $\left( \begin{matrix}-1 & 0 \\-1 & 1 \end{matrix} \right)$   & $\left( \begin{matrix}-1 & 0 \\0 & 1 \end{matrix} \right)$  
\end{tabular}

}

\caption{Gluing matrices on the central edge between saddle atoms}
\label{Tab:SaddleSaddleCent}

\end{table}

All the labels  ($r, \varepsilon$ and $n$) are calculated using gluing matrices using the formulas from  \cite{BolFom}. Theorem~\ref{Th:main2} is completely proved. \end{proof}

\begin{remark} \label{Rem:Topalov} We demonstrate that  according to the Topalov formula (see  \cite[Volume 2, Section 1.9]{BolFom}) for $V(r)=0$ the label $n$ is equal to $0$ or $-2$,  which is consistent with Theorem~\ref{Th:main2},  but does not agree with the label $n$ specified in \cite[Volume 2, Theorem  3.11]{BolFom}.  If $V(r)=0$, then for any component of a nonsingular isoenergy surface $\pi(Q^3) = \mathbb{RP}^2$ and $Q^3 \approx L_{4, 1}$. Hence $\operatorname{Tor} H_1( Q^3) = 4$. There are two possible variants.

\begin{enumerate}

\item Let $r = \frac{L}{2}$ be a local maximum of the function $U_h(r)$. In this case, there are no atoms with stars in the molecule, but there are two central atoms $A$ (an example of such an atom and a function  $U_h(r)$  are shown in Fig.~\ref{Fig:MoleculeNoCenter}).

  \begin{figure}[h!]
   \centering
   \includegraphics{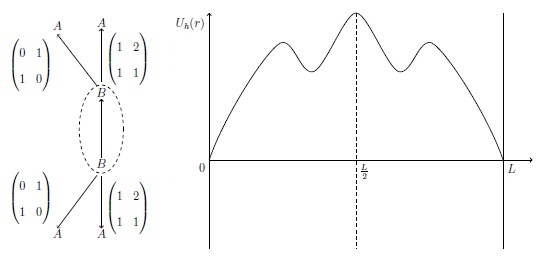}
 \caption{Molecule without central atoms $A$}
\label{Fig:MoleculeNoCenter}
\end{figure}

On the corresponding two edges coming from a single family, the gluing matrix is $\left( \begin{matrix} 1 & 2 \\1 & 1 \end{matrix} \right) \displaystyle$ and the label $r = \frac{1}{2} \displaystyle$  (note that for  $K<0$  we reversed an edge, and therefore we need to take not the matrix from Table~\ref{Tab:EllSaddleMatr} but the inverse of it).  On the remaining outgoing edges, the gluing matrix is  $\left( \begin{matrix} 0 & 1 \\1 & 0 \end{matrix} \right) \displaystyle$ and the label $r = 0$.  In the notation from  \cite{BolFom} the energy of the equipped molecule is  $N(W^*) =4$ and \begin{equation} \label{Eq:TopalovMax} N(W^*) = \beta_1 \dots \beta_m \tilde{n}.\end{equation} Two denominators of $r$-labels are equal to two: $\beta_1 = \beta_2 = 2$ and the rest $\beta_i =0$. Therefore, the ``energy of the family'' is $\tilde{n} = \pm 1$. On the other hand, $\tilde{n} = n  + \sum\limits_{\textrm{outer edges}}^{} r_i  = n +1$. Thus $n = 0$ or $-2$. 

\item Let $r = \frac{L}{2}$ be the local minimum of the function $U_h(r)$.  In this case, in the molecule  there are $p=2$ atoms with a star but there are no central atoms $A$ (an example of such an atom and a function $U_h(r)$ are shown in Fig.~\ref{Fig:Molecule2Center}).

 \begin{figure}[h!]
  \centering
 \includegraphics{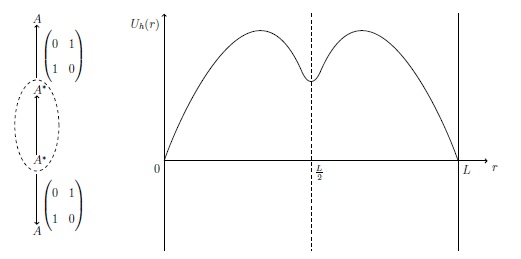}
 \caption{Molecule with central atoms $A$}
\label{Fig:Molecule2Center}
\end{figure}

On all outgoing edges, the transition matrix is  $\left( \begin{matrix} 0 & 1 \\1 & 0 \end{matrix} \right) \displaystyle$
and the label $r = 0$.  According to the rules from \cite{BolFom} the formula~\eqref{Eq:TopalovMax} will be replaced by  \[ N(W^*) = 4 \beta_1 \dots \beta_m \tilde{n},\] and the ``energy of the family'' $\tilde{n} = n   + \sum\limits_{\textrm{outer edges}}^{} r_i  +\frac{p}{2} = n +1$.  From where, again, $n = 0$ or $-2$.

\end{enumerate}

The ambiguity in the label  $n$ is due to a choice of orientation of $Q^3$. If we change the orientation of $Q^3$, then the label $n=0$ changes to $n=-2$ and vice versa (see  \cite[Volume 1, Section 4.5.2]{BolFom}). We have chosen the orientation so that $n=-2$. \end{remark}

\section*{Acknowledgments}

I.\,K.~Kozlov would like to thank Prof.~A.\,V.~Bolsinov for useful comments and tips when writing the paper. The
work of I.\,K.~Kozlov was supported by the Russian Science Foundation grant (project № 17-11-01303).


\begin{thebibliography}{99}

\bibitem{BolFom} A.\,V.~Bolsinov and A.\,T.~Fomenko,  \textit{Integrable Hamiltonian systems. Geometry, topology, classification}, Chapman \& Hall/CRC, Boca Raton, FL, 2004, xvi+730 pp.
   
   
   
   
   \bibitem{Kant} E.\,O.~Kantonistova, “Topological classification of integrable Hamiltonian systems in a potential field on surfaces of revolution”, \textit{Sb. Math.}, \textbf{207}:3 (2016), 358--399.
    
    
    \bibitem{Tim}  D.\,S.~Timonina, ``Liouville classification of integrable geodesic flows in a potential field on two-dimensional manifolds of revolution: the torus and the Klein bottle'', \textit{Sb. Math.}, \textbf{209}:11 (2018), 1644--1676.
    
    
     
    \bibitem{besse}   A.\,L.~Besse, \textit{Manifolds all of whose geodesics are closed},
Ergeb. Math. Grenzgeb., \textbf{93}, Springer-Verlag, Berlin–New York, 1978, ix+262 pp.
    


\bibitem{FomenkoMorseTh86}  A.\,T.~Fomenko, ``Morse theory of integrable Hamiltonian systems'', \textit{Soviet Math. Dokl.}, \textbf{33}:2 (1986), 502--506.





\bibitem{FomenkoIsoTop86}  A.\,T.~Fomenko, ``The topology of surfaces of constant energy in integrable Hamiltonian systems, and obstructions to integrability'', \textit{Math. USSR-Izv.}, \textbf{29}:3 (1987), 629--658.




\bibitem{FomenkoZieschang87}   A.\,T.~Fomenko, H.~Zieschang, ``On the topology of the three-dimensional manifolds arising in Hamiltonian mechanics'', \textit{Soviet Math. Dokl.}, \textbf{35}:2 (1987), 520--534.




\bibitem{FomenkoFunk88}   A.\,T.~Fomenko, ``Topological invariants of Liouville integrable Hamiltonian systems'', \textit{Funct. Anal. Appl.}, \textbf{22}:4 (1988), 286--296.


\bibitem{GeigesLange} H.~Geiges, C.~Lange, ``Seifert fibrations of lens spaces'', \texttt{arXiv:1608.06844 [math.GT]}




\end{thebibliography}
 \end{document}